\newtheorem{theorem}{Theorem}
\newtheorem{lemma}{Lemma}
\newcommand{\reals}{\mathbb{R}}
\newcommand{\E}{\mathbb{E}}
\newcommand{\be}{\mathbf{e}}
\newcommand{\bw}{\mathbf{w}}
\newcommand{\bb}{\mathbf{b}}
\newcommand{\by}{\mathbf{y}}
\newcommand{\bxi}{\boldsymbol{\xi}}
\newcommand{\bepsilon}{\boldsymbol{\epsilon}}
\newcommand{\Ocal}{\mathcal{O}}
\newcommand{\norm}[1]{\|#1\|}
\newcommand{\inner}[1]{\langle#1\rangle}
\newcommand{\secref}[1]{Sec.~\ref{#1}}
\renewcommand{\eqref}[1]{Eq.~(\ref{#1})}
\newcommand{\lemref}[1]{Lemma~\ref{#1}}
\newcommand{\thmref}[1]{Thm.~\ref{#1}}
\title{A Tight Convergence Analysis for Stochastic Gradient 
	Descent with Delayed Updates}
\author{
  Yossi Arjevani \hspace{2.5cm} Ohad Shamir\\
	Weizmann Institute of Science\\
	Rehovot 7610001, Israel \\
  \texttt{\{yossi.arjevani,ohad.shamir\}@weizmann.ac.il} \\
   \and
   \and
   Nathan Srebro \\
   TTI Chicago \\
   Chicago, IL 60637\\
   \texttt{nati@ttic.edu} \\
}
\date{}
\begin{document}

\maketitle

\begin{abstract}
	We provide tight finite-time convergence bounds for gradient descent and 
	stochastic gradient descent on quadratic functions, when the gradients are 
	delayed and reflect iterates from $\tau$ rounds ago. First, we show that 
	without stochastic noise, delays strongly affect the attainable 
	optimization error: In fact, the error can be as bad as non-delayed 
	gradient descent ran on only $1/\tau$ of the gradients. In sharp contrast, 
	we quantify how stochastic noise makes the effect of delays negligible, 
	improving on previous work which only showed this phenomenon asymptotically 
	or for much smaller delays. Also, in the context 
	of distributed optimization, the results indicate that the performance of 
	gradient descent with delays is competitive with synchronous approaches 
	such as mini-batching. Our results are based on a novel technique for 
	analyzing convergence of optimization algorithms using generating functions.
\end{abstract}

\section{Introduction}

Gradient-based optimization methods are widely used in machine learning and 
other large-scale applications, due to their simplicity and scalability. 
However, in their standard formulation, they are also strongly synchronous and 
iterative in nature: In each iteration, the update step is based on the 
gradient at the current iterate, and we need to wait for this 
computation to finish before moving to the next iterate. For example, to 
minimize some function $F$, plain stochastic gradient descent initializes at 
some point $\bw_0$, and computes iterates of the form
\begin{equation}\label{eq:sgd}
\bw_{k+1} = \bw_{k}-\eta (\nabla F(\bw_k)+\bxi_k)~,
\end{equation}
where $\nabla F(\bw_k)$ is the gradient of $F$ at $\bw_k$, $\eta$ is the step 
size and $\bxi_1,\bxi_2,\ldots$ are independent zero-mean noise terms. 
Unfortunately, in several important applications, a direct implementation of 
this is too costly. For example, consider a setting where we wish to optimize a 
function $F$ using a distributed platform, consisting of several machines with 
shared memory. We can certainly implement gradient descent, by letting one of 
the machines compute the gradient at each iteration, but this is clearly 
wasteful, since just one machine is non-idle at any given time. Thus, it is 
highly desirable to use methods which parallelize the computation. One approach 
is to employ \emph{mini-batch gradient} methods, which parallelize the 
computation of the stochastic gradient, and their analysis is relatively well 
understood (e.g. 
\cite{dekel2012optimal,cotter2011better,shamir2014distributed,takac2013mini}). 
However, these methods are still generally iterative and synchronous in 
nature, and hence can suffer from problems such as having to wait for the 
slowest machine at each iteration. 

A second and popular approach is to utilize  
\emph{asynchronous} gradient methods. With these methods, each update step is 
not necessarily based just on the 
gradient of the current iterate, but possibly on the gradients of earlier 
iterates (often called \emph{stale updates}). For example, when optimizing a 
function using several machines, each machine might read the current iterate 
from a shared parameter server, compute the gradient at that iterate, and then 
update the parameters, even though other machines might have performed other 
updates to the parameters in the meantime. Although such asynchronous methods 
often work well in practice, analyzing them is much trickier than synchronous 
methods. 

In our work, we focus on arguably the simplest possible variant of these 
methods, where we perform plain stochastic gradient descent on a convex 
function $F$ on $\reals^d$, with a fixed delay of $\tau>0$ in the gradient 
computation:
\begin{equation}\label{eq:dsgd}
\bw_{k+1} = \bw_{k}-\eta (\nabla F(\bw_{k-\tau})+\bxi_k)~,
\end{equation}
where we assume that $\bw_0=\bw_1=\ldots=\bw_{\tau}$. 
Compared to \eqref{eq:sgd}, we see that the gradient is computed with respect 
to $\bw_{k-\tau}$ rather than $\bw_k$. Already in this simple formulation, 
the precise effect of the delay on the convergence rate is not completely 
clear. For example, for a given number of iterations $k$, how large can $\tau$ 
be before we might expect a significant deterioration in the accuracy? And 
under what conditions? Although there exist some prior results in this 
direction (which we survey in the related work section below), these questions 
have remained largely open. 

In this paper, we aim at providing a tight, finite-time convergence analysis 
for stochastic gradient descent with delays, focusing on 
the simple case where $F$ is a convex quadratic function. Although a quadratic 
assumption is non-trivial, it arises naturally in problems such as least 
squares, and is an important case study since all smooth and convex function 
are locally quadratic close to their minimum (hence, our results should still 
hold in a local sense). In future work, we hope to show that our results are 
also applicable more generally. 

First, we consider the case of \emph{deterministic} delayed gradient descent 
(DGD, defined in \eqref{eq:dsgd} with  $\bxi_k=\mathbf{0}$). Assuming the step 
size $\eta$ is chosen appropriately, we prove that
\begin{align*}
F(\bw_k)-F(\bw^*)~\leq~ 5\mu\|\bw_0 - \bw^*\|^2\exp\left(-\frac{\lambda 
(k+1)}{10\mu(\tau+1)}\right)
\end{align*}
after $k$ iterations, over the class of $\lambda$-strongly convex 
$\mu$-smooth quadratic functions with a minimum at $\bw^*$, and
\begin{align*}
F(\bw_k)-F(\bw^*)~\leq~\frac{17\mu\|\bw_0 - \bw^*\|^2(\tau+1)}{k+1}
\end{align*}
over the class of $\mu$-smooth  convex quadratic functions with minimum at 
$\bw^*$. In terms of 
iteration complexity, the number of iterations $k$ required to achieve a fixed 
optimization error of at most $\epsilon$ in the strongly convex and the 
convex cases is therefore
\begin{equation}
\Ocal\left(\tau\cdot\kappa\ln\left(\frac{\mu\norm{\bw_0-\bw^*}^2}{\epsilon}\right)\right)~~~~
\text{and}~~~~
\Ocal\left(\tau\cdot \frac{\mu\norm{\bw_0-\bw^*}^2}{\epsilon}\right)
\label{eq:itercomp}
\end{equation}
respectively, where $\kappa\coloneqq \mu/\lambda$ is the so-called 
condition number\footnote{Following standard 
convention, we use here the $\Ocal$-notation to hide 
constants, and tilde $\tilde{\Ocal}$-notation to hide constants and factors 
polylogarithmic in the problem parameters.}. When $\tau$ is a bounded constant, 
these bounds match the 
known iteration complexity of standard gradient descent without delays 
\cite{nesterov2004introductory}. 
However, as $\tau$ increases, both bounds deteriorate linearly with 
$\tau$. Notably, in our setting of delayed gradients, this implies that DGD 
is no better than a trivial algorithm, which performs a single gradient 
step, and then waits for $\tau$ rounds till the delayed gradient is 
received, before performing the next step (thus, the algorithm is 
equivalent to non-delayed gradient descent with $k/\tau$ gradient steps, 
resulting in the same linear deterioration of the iteration complexity with
$\tau$). 

Despite these seemingly weak guarantees, we show that they are in fact 
tight in terms of $\tau$, by proving that this linear dependence on $\tau$ is 
unavoidable with standard gradient-based methods (including gradient 
descent). The dependence on the other problem parameters in our lower bounds is 
a bit weaker than our upper bounds, but can be matched by an \emph{accelerated} 
gradient descent procedure (see \secref{section:deterministic_delayed}  
for more details). 

In the second part of our paper, we consider the case of \emph{stochastic} 
delayed gradient descent (SDGD, defined in (\ref{eq:dsgd})). Assuming $\bxi_k$ 
satisfies $\E[\norm{\bxi_k}^2]\leq \sigma^2$ and that the step size $\eta$ is 
appropriately tuned, we prove that
\begin{equation}
\E\left[F(\bw_k)-F(\bw^*)\right]~\leq~\tilde{\Ocal}\left(\frac{\sigma^2}{\lambda
k}+\mu \norm{\bw_0-\bw^*}^2\exp\left(-\frac{\lambda k
}{10\mu\tau}\right)\right)~.
\label{eq:sdgdstrong}
\end{equation}
for $\lambda$-strongly convex, $\mu$-smooth quadratic functions with minimum at 
$\bw^*$, and
\begin{equation}
\E\left[F(\bw_k)-F(\bw^*)\right]~\leq~ \tilde{\Ocal}\left(\frac{ 
\norm{\bw_0-\bw^*}\sigma}{\sqrt{k}}
+  \frac{\norm{\bw_0-\bw^*}^2 \mu\tau }{k}\right).
\label{eq:sdgdconvex}
\end{equation}
for $\mu$-smooth convex quadratic functions. In terms of iteration complexity, 
these correspond to
\begin{equation}
\tilde{\Ocal}\left(\frac{\sigma^2}{\lambda 
\epsilon}+\tau\cdot 
\kappa\ln\left(\frac{\mu\norm{\bw_0-\bw^*}^2}{\epsilon}\right)\right)
~~~\text{and}~~~
\tilde{\Ocal}\left(\frac{\norm{\bw_0-\bw^*}^2\sigma^2}{\epsilon^2}+\tau\cdot
\frac{\norm{\bw_0-\bw^*}^2\mu}{\epsilon}
\right)~,
\label{eq:itercompstoch}
\end{equation}
in the strongly convex and convex cases respectively, where again 
$\kappa:=\mu/\lambda$. As in the deterministic case, when $\tau$ is a 
bounded constant, these bounds match the known iteration complexity bounds for 
standard gradient descent without delays 
\cite{bubeck2015convex,shamir2013stochastic}. Moreover, these bounds match 
the bounds for the deterministic case in \eqref{eq:itercomp} when $\sigma^2=0$ 
(i.e. zero noise), as they should. However, in sharp contrast to the 
deterministic case, the dependence on $\tau$ 
in \eqref{eq:itercompstoch} is quite different: The delay $\tau$ only appears 
in second-order terms (as $\epsilon\rightarrow 0$), and its influence becomes 
negligible when $\epsilon$ is small enough. The same effect can be seen in 
\eqref{eq:sdgdstrong} and \eqref{eq:sdgdconvex}: Once the number of iterations 
$k$ is large enough, the first term in both bounds dominates, and $\tau$ no 
longer plays a role. More specifically:
\begin{itemize}[leftmargin=*]
	\item In the strongly convex case, the effect of the delay becomes 
	negligible once the target accuracy $\epsilon$ is sufficiently smaller than 
	$\tilde{\Ocal}(\sigma^2/(\mu\tau))$, or when the number of iterations $k$ 
	is sufficiently larger 
	than $\tilde{\Omega}(\tau\mu/\lambda)$. In other words, assuming the 
	condition number $\mu/\lambda$ is bounded, we can have the delay $\tau$ 
	nearly as large as the total number of iterations $k$ (up to 
	log-factors), without significant deterioration in the convergence 
	rate. Note that this is a mild requirement, since if $\tau\geq k$, the 
	algorithm receives no gradients and makes no updates.
	\item In the convex case, the effect of the delay becomes negligible once 
	the target accuracy $\epsilon$ is sufficiently smaller than
	$\tilde{\Ocal}(\sigma^2/(\mu\tau))$, or 
	when the number of iterations $k$ is sufficiently larger than $\tilde{\Omega}((\norm{\bw_0-\bw^*}\mu\tau/\sigma)^2)$. Compared to the 
	strongly convex case, here the regime is the same in terms of $\epsilon$, 
	but the regime in terms of $k$ is more restrictive: We need $k$ to scale 
	quadratically (rather than linearly) with $\tau$. Thus, the maximal delay 
	$\tau$ with no performance deterioration is order of $\sqrt{k}$.
\end{itemize}

Finally, it is interesting to compare our bounds to those of \emph{mini-batch} 
stochastic gradient descent (SGD), which can be seen as a synchronous 
gradient-based method to cope with delays, especially in distributed 
optimization and learning problems 
\cite{dekel2012optimal,cotter2011better,agarwal2011distributed}. In 
mini-batch SGD, each update step is performed only 
after accumulating and averaging a mini-batch of $b$ stochastic gradients, all 
with respect to the same point:
\[
\forall k\in \{0,b,2b,\ldots\},~~\bw_{k+b} = \bw_k-\eta\cdot 
\frac{1}{b}\sum_{i=0}^{b-1}\left(\nabla 
F(\bw_k)+\xi_{k+i}\right)~,
\]
Although the algorithm makes an update only every $b$ stochastic gradient 
computations, the averaging reduces the stochastic noise, and helps speed up 
convergence. Moreover, this can be seen as a particular type of algorithm with 
delayed updates (with the delay correspond to $b$), as we use $\nabla F(\bw_k)$ 
to compute iterate $\bw_{k+b}$. The important difference is that it is an 
inherently synchronous method, that waits for all $b$ stochastic gradients to 
be computed before performing an update step. Remarkably, the bounds we proved 
above for delayed SGD are essentially identical to those known for mini-batch 
SGD, with the delay $\tau$ replaced by the mini-batch size $b$ (at least in the 
convex case where mini-batch SGD has been more thoroughly analyzed). This 
indicates 
that an asynchronous method like delayed SGD can potentially match the 
performance of synchronous methods like mini-batch SGD, even without requiring 
synchronization -- an important practical advantage.

Analyzing gradient descent with delays is notoriously tricky, due to the 
dependence of the updates on iterates produced many iterations ago. The 
technique we introduce for deriving our upper bounds is primarily based on 
\emph{generating functions}, and might be useful for studying other 
optimization algorithms. 
We discuss this approach 
more thoroughly in Section \ref{section:generating_functions_and_framework}. 
The rest of the paper is devoted mostly to presenting the 
formal theorems and an explanation of how they are derived (with technical 
details relegated to the supplementary material). 


\subsection*{Related Work}

There is a huge literature on asynchronous versions of gradient-based methods 
(see for example the seminal book \cite{bertsekas1989parallel}), 
including treating the 
effect of delay. However, most of these do not consider the setting we study 
here. For example, there has been much recent interest in asynchronous 
algorithms, in a model where there is a delay in updating individual 
\emph{coordinates} in a shared parameter vector (e.g., the Hogwild! algorithm 
of \cite{recht2011hogwild}, or more recently 
\cite{mania2015perturbed,leblond18improved}). Of course, this is a different 
model than ours, where the updates use a full gradient 
vector. Other works (such as 
\cite{sirb2016decentralized}) focus on a setting where different agents in a 
network can perform local communication, which is again a different model than 
ours. Yet other works focus on sharp but asymptotic results, and do not provide 
guarantees after a fixed number $k$ of iterations
(e.g., \cite{chaturapruek2015asynchronous}). 

Moving closer to our setting, 
\cite{nedic2001distributed} showed convergence for delayed 
gradient descent, with the result implying an 
$\sqrt{\tau/k}$ convergence rate for convex functions. A similar bound on 
average regret has been shown in an adversarial online learning setting, for 
general convex functions, and this bound is known to be optimal 
\cite{joulani2013online}. These results differ from our setting, in that they 
consider possibly non-smooth functions, in which the dependence on 
$k$ is no better than $1/\sqrt{k}$ even without delays and no noise, and where 
the delay $\tau$ always plays a significant role. In contrast, we focus 
here on smooth functions, where rates better than $1/\sqrt{k}$ are possible, 
and where the effect of $\tau$ is more subtle. In 
\cite{feyzmahdavian2014delayed}, the authors study a setting very 
similar to ours in the deterministic case, and manage to prove a linear 
convergence rate, but for a less standard algorithm, different than the one we 
study here (with iterates of the 
form $\bw_{t+1} = \bw_{t-\tau}-\nabla 
F(\bw_{t-\tau})$). 

Perhaps the works closest to ours are 
\cite{agarwal2011distributed,feyzmahdavian2016asynchronous}, which 
study stochastic gradient descent with delayed gradients. Moreover, they 
consider a setting more general  than ours, where the delay at each 
iteration is any integer up to $\tau$ (rather than fixed $\tau$), and the 
functions are not necessarily quadratic. On the flip side, their bounds are 
significantly weaker. For example, for 
smooth convex functions and an appropriate step size, 
\cite[Corollary 1]{agarwal2011distributed} show a bound of
\[
\Ocal\left(\frac{\sigma}{\sqrt{k}}+\frac{\tau^2+1}{\sigma^2 
	k}\right).
\]
in terms of $k,\tau,\sigma$. Note that this bound is vacuous in the 
deterministic or near-deterministic case 
(where $\sigma^2\approx 0$), and is weaker than our bounds. With a 
different choice of the step size, 
it is possible to get a non-vacuous bound even if $\sigma^2\rightarrow 0$, but 
the dependence on $\tau$ becomes even stronger.  
\cite{feyzmahdavian2016asynchronous} improve the 
bound to
\[
\Ocal\left(\frac{\sigma}{\sqrt{k}}+\frac{\tau^2+1}{k}\right)~~~\text{and}~~~
\Ocal\left(\frac{\sigma^2}{k}+\frac{\tau^4+1}{k^2}\right).
\]
in the convex and strongly convex case respectively. 
Even if $\sigma^2=0$, the iteration complexity is $\Ocal(\tau^2/\epsilon)$ and 
$\Ocal(\tau^2/\sqrt{\epsilon})$, and implies a quadratic dependence on $\tau$ 
(whereas in our bounds the scaling is linear). When $\sigma^2$ is positive, the 
effect of delay on the bound is negligible only up to $\tau=\Ocal(\sqrt[4](k))$ 
(in contrast to $\tilde{\Ocal}(\sqrt{k})$ or even $\tilde{\Ocal}(k)$ in our 
bounds). We note that there are several other works which 
study a similar setting (such as \cite{sra2015adadelay}), but do not result in 
bounds which improve on the above. 

Finally, we note that \cite{langford2009slow} attempt to show that for 
stochastic gradient descent with delayed updates, the dependence on the delay 
$\tau$ is negligible after sufficiently many iterations. 
Unfortunately, as pointed out in \cite{agarwal2011distributed}, the analysis 
contains a bug which make the results invalid.

\section{Framework and the Generating Functions Approach} 
\label{section:generating_functions_and_framework}

Throughout, we will assume that $F$ is a convex quadratic function specified 
by 
\begin{align} \label{quadratic_problem}
F(\bw) \coloneqq \frac{1}{2}\bw^\top A\bw + \bb^\top\bw +c,
\end{align}
where $A\in\reals^{d\times d}$ is a positive semi-definite matrix whose 
eigenvalues $a_1,\dots,a_d$ are in $[0,\mu]$ (where $\mu$ is the smoothness 
parameter), $\bb\in \reals^d$ and $c\in \reals$. To make the optimization 
problem meaningful, we further assume that $F$ is 
bounded from below, which implies that it has some minimizer $\bw^*\in\reals^d$ 
at which the gradient vanishes (for completeness, we provide a proof in  
\lemref{lem:bounded_quadratic} in the 
supplementary material). 
Letting $\be_k = \bw_k-\bw^*$, it is easily verified that
\begin{align}\label{eq:convex_value_error}
F(\bw_k)-F(\bw^*) = \frac{1}{2}\left\| \sqrt{A} (\bw-\bw^*)\right\|^2 = 
\frac{1}{2}\left\| \sqrt{A} \be_k\right\|^2,
\end{align}
so our goal will be to analyze the dynamics of $\be_k$. 

To explain our technique, consider the iterates of DGD on the function $F$, 
which can be written as
$\bw_{k+1}=\bw_k-\eta \nabla F(\bw_{k-\tau}) =\bw_k-\eta (A\bw_{k-\tau} +\bb)$. 
Since $\nabla F(\bw^*)=0$, we have $\bw^*=\bw^*-\eta (A\bw^* +\bb)$, by which 
it follows that the error term $\be_k = \bw_k-\bw^*$, satisfies the recursion 
$\be_{k+1} = \be_k -\eta A  \be_{k-\tau}$, and (by definition of the algorithm) 
$\be_0=\be_1=\ldots=\be_\tau$. By some simple arguments, our analysis then 
boils down to bounding the elements of the scalar-valued version of this 
sequence, namely
\begin{align} \label{basic_dynamics}
\begin{aligned}
b_0&=\dots=b_\tau\in\reals,\\
b_{k+1} &= b_k - \alpha b_{k-\tau},~k\ge \tau,
\end{aligned}
\end{align} 
for some integer $\tau\ge0$ and non-negative real number $\alpha\ge0$. To 
analyze this sequence, we rely on tools from the area of generating functions, 
which have proven very effective in studying growth rates of 
sequences in many areas of mathematics. We now turn to briefly describe these 
functions and our approach (for general surveys on generating functions, see 
\cite{wilf2005generatingfunctionology,flajolet2009analytic,stanley1986enumerative},
to name a few).


Generally speaking, generating functions are formal power series associated 
with infinite sequences of numbers
. Concretely, given a sequence $(b_k)$ of numbers in a ring $R$, we define the 
corresponding generating function as a formal power series in $z$, defined as 
$
f(z) = \sum_{k=0}^\infty b_k z^k
$.
The set of all formal power series in $z$ over $R$ is denoted by $R[[z]]$. 
Moreover, given two power series defined by sequences $(a_k)$ and $(c_k)$, we 
can define their addition as the power series corresponding to $(a_k+c_k)$, and 
their multiplication as the coefficients of the Cauchy product of the 
power series, namely $(\sum_k a_k z^k)(\sum_k c_k z^k) = \sum_k 
(\sum_{l=0}^{k}a_l c_{k-l}) z^k$. In particular, over the reals, $\reals[[z]]$ 
endowed with 
addition and multiplication is a commutative 
ring, and the set of matrices with elements in $\reals[[z]]$ (with the standard 
addition and 
multiplication operations) forms a matrix algebra, denoted by 
$\mathcal{M}(\reals[[z]])$. 
We will often use the fact that any matrix, whose 
entries are power series with scalar coefficients, can also be written as a 
power series with matrix-valued coefficients: More formally, 
$\mathcal{M}(\reals[[z]])$ is naturally identified with the ring of formal 
power series with real matrix coefficients $\mathcal{M}(\reals)[[z]]$. To 
extract the coefficients of a given $M(z)\in \mathcal{M}(R[[z]])$, we shall use 
the conventional bracket notation $[z^k]M(z)$, defined to be a matrix whose 
entries are the $k$'th coefficients of the respective formal power series. 

Returning to \eqref{basic_dynamics}, we write $(b_k)$ as a formal 
power series denoted by $f(z)$, and proceed as follows,
\begin{align} 
f(z) &= 
\sum_{k=0}^{\tau} b_k z^k + \sum_{k=\tau+1}^\infty (b_{k-1} - \alpha b_{k-\tau-1}) z^k
= \sum_{k=0}^{\tau} b_k z^k + \sum_{k=\tau+1}^\infty b_{k-1}  z^k -\alpha 
\sum_{k=\tau+1}^\infty  b_{k-\tau-1} z^k\notag\\
&= \sum_{k=0}^{\tau} b_k z^k +z\left(  f(z) - \sum_{k=0}^{\tau-1} b_{k}z^k\right)   -\alpha z^{\tau+1}f(z)
 = b_0 +(z-\alpha z^{\tau+1})f(z)~.\label{eq:generating_function_derivation}
\end{align}
Denoting 
\[\pi_\alpha(z) ~\coloneqq~ 1-z+\alpha z^{\tau+1}
\] and rearranging terms gives 
\begin{align} \label{eq:coefficients_of_b}
f(z) = \frac{ b_0 }{\pi_\alpha(z)}\quad \implies\quad
b_k = [z^k] f(z) =  [z^{k}] \frac{b_0}{\pi_\alpha(z)}~
\end{align}
(by a 
well-known fact, $\pi_\alpha(z)$ is invertible in $\reals[[z]]$, as its constant term 1 is trivially invertible in $\reals$ -- see surveys 
mentioned above). We now see that the problem of bounding the coefficients $(b_k)$ is reduced 
to that of estimating the coefficients of the rational function 
${1}/{\pi_\alpha(z)}$, written as a power series.
Note that for the analogous problem where the elements of the sequence are 
vectors $(\bb_k)_{k=0}^\infty$ and the factor $\alpha$ is replaced by $\alpha  
A$ for some square matrix $A$, the same derivation as above yields 
$\sum_{k=0}^\infty \bb_k z^k = (I-z+\alpha A z)^{-1}\bb_0$ (likewise, $I-z+ A z$ is invertible in $\mathcal{M}(\reals)[[z]]$
as its constant term $I$ is invertible in $\mathcal{M}(\reals)$).

To estimate the coefficients of ${1}/{\pi_\alpha(z)}$, we form its 
corresponding partial fraction decomposition. First, we note that as a 
polynomial of degree $\tau+1$, $\pi_{\alpha}(z)$ has $\tau+1$ roots 
$\zeta_1,\ldots,\zeta_{\tau+1}$ (possibly 
complex-valued, and all non-zero since $\pi_\alpha(0)=1$ for any 
$\alpha\in\reals$). Assuming $\alpha$ is chosen so that all the roots are 
distinct (equivalently, 
$\pi'_\alpha(\zeta_i)\neq0,$ for 
$i\in[\tau+1]$), we have by a standard derivation
\begin{align*}
\frac{1}{\pi_\alpha(z)} = \sum_{i=1}^{\tau+1} \frac{1}{\pi_\alpha'(\zeta_i) (z-\zeta_i) }
= \sum_{i=1}^{\tau+1} \frac{-1}{\pi_\alpha'(\zeta_i) \zeta_i }\cdot \frac{1}{1- \frac{z}{\zeta_i} }
= \sum_{i=1}^{\tau+1} \frac{-1}{\pi_\alpha'(\zeta_i) \zeta_i} \sum_{k=0}^\infty \left(\frac{z}{\zeta_i}\right)\!^k.
\end{align*}
Thus,
\begin{align}\label{z_coefficient}
[z^k] \left(\frac{1}{\pi_\alpha(z)}\right) = \sum_{i=1}^{\tau+1} 
\frac{-1}{\pi_\alpha'(\zeta_i) \zeta_i^{k+1}}~.
\end{align}
To bound the magnitude of $1/\zeta_i$ and $\pi'_\alpha(\zeta_i)$, we invoke the 
following lemma, whose proof (in the supplementary material) relies on some 
tools from complex analysis: 
\begin{lemma} \label{lem:roots_bound}
	Let $\alpha\in\left(0,{1}/{20 (\tau+1)}\right]$, and assume $|\zeta_1|\le|\zeta_2|\le\dots\le|\zeta_{\tau+1}|$, then 
	\begin{enumerate}
		\item		$\zeta_1$ is a real scalar satisfying 
		$1/\zeta_1 \le 1- \alpha$, and for $i>1$, $|1/\zeta_i|\le 
		1-\frac{3}{2(\tau+1)}$. 
		\item  $|\pi'_\alpha(\zeta_i)|>1/2$, for any $i\in[\tau+1]$.
	\end{enumerate} 
\end{lemma}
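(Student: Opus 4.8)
The plan is to pass to the reciprocal polynomial, pin down one root precisely by elementary calculus, and confine the remaining roots with Rouch\'e's theorem (the only genuinely complex-analytic ingredient). Since $\pi_\alpha(0)=1\neq 0$, every root $\zeta_i$ is nonzero, and dividing the identity $1-\zeta_i+\alpha\zeta_i^{\tau+1}=0$ by $\zeta_i^{\tau+1}$ shows that $w_i:=1/\zeta_i$ is a root of
\[
q(w):=w^{\tau+1}-w^{\tau}+\alpha=w^{\tau}(w-1)+\alpha,\qquad\text{equivalently}\qquad w_i^{\tau}(1-w_i)=\alpha;
\]
conversely $w\mapsto 1/w$ maps the roots of $q$ bijectively onto those of $\pi_\alpha$, so it suffices to control the moduli $|w_i|$. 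I would also record at the outset the algebraic identity that turns a modulus bound on $w_i$ into a derivative bound: from $\alpha\zeta_i^{\tau}=(\zeta_i-1)/\zeta_i=1-w_i$ we get $\pi_\alpha'(\zeta_i)=-1+(\tau+1)\alpha\zeta_i^{\tau}=\tau-(\tau+1)w_i$. (The case $\tau=0$ is immediate, so assume $\tau\ge 1$ throughout.)

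First I would isolate one real root of $q$ lying close to $1$. Set $g(w)=w^{\tau}(1-w)$ on $[0,1]$; then $g'(w)=w^{\tau-1}\bigl(\tau-(\tau+1)w\bigr)$, so $g$ increases on $(0,\tfrac{\tau}{\tau+1})$, decreases on $(\tfrac{\tau}{\tau+1},1)$, and attains maximum $g(\tfrac{\tau}{\tau+1})=(1-\tfrac1{\tau+1})^{\tau}\tfrac1{\tau+1}$. Using $(1-\tfrac1{\tau+1})^{\tau}\ge e^{-1}$ and $\alpha\le\tfrac1{20(\tau+1)}<\tfrac{e^{-1}}{\tau+1}$, this maximum strictly exceeds $\alpha$, while $g(1)=0<\alpha$; by the intermediate value theorem and strict monotonicity there is a unique real $w_1\in(\tfrac{\tau}{\tau+1},1)$ with $g(w_1)=\alpha$. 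From $w_1<1$ we get $1-w_1=\alpha/w_1^{\tau}>\alpha$, hence $w_1<1-\alpha$. For the derivative bound at this root I need the sharper estimate $w_1>1-\tfrac1{2(\tau+1)}$: if this failed, monotonicity of $g$ on $(\tfrac{\tau}{\tau+1},1)$ would force $\alpha=g(w_1)\ge g\bigl(1-\tfrac1{2(\tau+1)}\bigr)=(1-\tfrac1{2(\tau+1)})^{\tau}\tfrac1{2(\tau+1)}\ge\tfrac1{4(\tau+1)}$ (using a Bernoulli-type bound $(1-\tfrac1{2(\tau+1)})^{\tau}\ge\tfrac12$), contradicting the hypothesis on $\alpha$.

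Next I would localize the remaining $\tau$ roots of $q$ using \textbf{Rouch\'e's theorem}. Put $r:=1-\tfrac{3}{2(\tau+1)}\in(0,1)$. On the circle $|w|=r$ one has $|w^{\tau}(w-1)|=r^{\tau}|w-1|\ge r^{\tau}(1-r)=r^{\tau}\cdot\tfrac{3}{2(\tau+1)}$, and an elementary estimate $r^{\tau}\ge(1-\tfrac{3}{2(\tau+1)})^{\tau+1}\ge\tfrac1{16}>\tfrac1{30}$ makes this strictly larger than $\alpha=|q(w)-w^{\tau}(w-1)|$. Hence $q$ and $w^{\tau}(w-1)$ have the same number of zeros in $\{|w|<r\}$, namely $\tau$ (the zero of order $\tau$ at $0$; the zero at $1$ is excluded since $r<1$). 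As $q$ has $\tau+1$ zeros in total and $w_1>r$ is one of them, the remaining roots $w_2,\dots,w_{\tau+1}$ all satisfy $|w_i|<r$. Passing back through $\zeta_i=1/w_i$ gives $|\zeta_1|<1/r<|\zeta_i|$ for $i>1$, so in the ordering of the statement $\zeta_1=1/w_1$ is real with $1/\zeta_1=w_1<1-\alpha$, and $|1/\zeta_i|=|w_i|<r=1-\tfrac{3}{2(\tau+1)}$ for $i>1$, which is part~1.

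Finally, part~2 is just substitution into $\pi_\alpha'(\zeta_i)=\tau-(\tau+1)w_i$. For $i>1$, since $(\tau+1)r=\tau-\tfrac12<\tau$, the reverse triangle inequality yields $|\pi_\alpha'(\zeta_i)|\ge\tau-(\tau+1)|w_i|>\tau-(\tau+1)r=\tfrac12$. For $i=1$, $w_1$ is real with $1-\tfrac1{2(\tau+1)}<w_1<1-\alpha$, so $(\tau+1)w_1>\tau+\tfrac12$ and $\pi_\alpha'(\zeta_1)=\tau-(\tau+1)w_1<-\tfrac12$, whence $|\pi_\alpha'(\zeta_1)|>\tfrac12$. (As a byproduct $|\pi_\alpha'(\zeta_i)|>0$, so every root is simple, which justifies the partial-fraction expansion used in the main text.) I expect the main obstacle to be the Rouch\'e step together with verifying that the numerical constants in the first two steps hold uniformly in $\tau$: this is exactly where the hypothesis $\alpha\le 1/(20(\tau+1))$ is consumed, and choosing the contour radius $r=1-\tfrac{3}{2(\tau+1)}$ so that both the Rouch\'e inequality and the final identity $\tau-(\tau+1)r=\tfrac12$ come out cleanly is the one genuine design choice in the proof.
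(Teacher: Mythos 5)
Your proof is correct and follows essentially the same route as the paper's: pass to the reciprocal polynomial $p_\alpha(w)=w^{\tau+1}-w^\tau+\alpha$, locate the real root in $\left(1-\tfrac{1}{2(\tau+1)},\,1-\alpha\right)$, confine the other $\tau$ roots inside $|w|=1-\tfrac{3}{2(\tau+1)}$ via Rouch\'e, and bound $|\pi_\alpha'(\zeta_i)|$ through the identity $\pi_\alpha'(\zeta_i)=\tau-(\tau+1)/\zeta_i$. The only differences are cosmetic (you use unimodality of $w^\tau(1-w)$ where the paper evaluates $p_\alpha$ at the two endpoints, and a slightly different Rouch\'e splitting yielding the same condition $R^\tau(1-R)>\alpha$), and your explicit handling of $\tau=0$ is a welcome extra care.
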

With this lemma at hand, we have
\begin{align*} 
\begin{aligned}
\left|[z^k] \left(\frac{1}{\pi_\alpha(z)}\right)\right|
&\le 
2(1-\alpha)^{k+1} + 2\tau\left(1-\frac{3}{2(\tau+1)}\right)^{k+1}
\le 2(1-\alpha)^{k+1} \left(1+\tau\exp\left(-\frac{k+1}{\tau+1}\right)\right)~,
\end{aligned}
\end{align*}
where the last inequality is due to \lemref{lem:k_regime} (provided in 
the supplementary material). 
Moreover, one can use elementary arguments to show that $|[z^k]1/\pi_\alpha(z)|\le 1$ for any $k\ge0$, as long as $\alpha\in [0,1/\tau]$  (see \lemref{lem:bound_1} in the supplementary 
material). Overall, for any $\tau\ge0$, we have 
\begin{align} \label{ineq:coefficients_bound}
\begin{cases}
\left|[z^k] \left(\frac{1}{\pi_\alpha(z)}\right)\right|
 \le 1 & 0\le k \le (\tau+1)\ln(2(\tau+1))-1,\\
\left|[z^k] \left(\frac{1}{\pi_\alpha(z)}\right)\right|
 \le 3(1-\alpha)^{k+1}& k \ge (\tau+1)\ln(2(\tau+1)),
\end{cases}
\end{align}
which, using \eqref{eq:coefficients_of_b}, gives the desired bounds on the elements $(b_k)$ defined in 
\eqref{basic_dynamics}.


\section{Deterministic Delayed Gradient Descent}
\label{section:deterministic_delayed}
We start by analyzing the convergence of DGD for $\lambda$-strongly convex and 
$\mu$-smooth quadratic functions, where the eigenvalues of $A$ are assumed to 
lie in $[\lambda,\mu]$ for some $\mu\geq \lambda>0$.

Following the same line of the derivation as in 
\eqref{eq:generating_function_derivation}, we obtain 
$\mathbf{e}(z) = (I-Iz+\eta A z^{\tau+1})^{-1} \be_0$.
Letting $[d]:=\{1,2,\ldots,d\}$, it follows that for any $k\ge 
(\tau+1)\ln(2(\tau+1))$,
\begin{align} \label{ineq:generating_bound}
\begin{aligned}
\|\be_k\|&=\|[z^k]\left((I-Iz+\eta A z^{\tau+1})^{-1}\be_0\right)\|
\stackrel{(a)}{=}\|[z^k]\left((I-Iz+\eta A z^{\tau+1})^{-1}\right)\be_0\|\\
&\stackrel{(b)}{\le} \max_{i\in[d]}\left|[z^k] \frac{1}{\pi_{\eta a_i}(z)} 
\right|\|\be_0\| \stackrel{(c)}{\le} 3 \max_{i\in[d]} (1-\eta 
a_i)^{k+1}\|\be_0\| \stackrel{(d)}{\le} 3(1-\eta \lambda)^{k+1}\norm{\be_0}~,
\end{aligned}
\end{align}
where $(a)$ follows by the linearity, $(b)$ by eigendecomposition of $A$ (that 
reveals that the spectral norm of a matrix polynomial equals the 
absolute value of the same polynomial in one of its eigenvalues), 
$(c)$ by 
Ineq.  
\ref{ineq:coefficients_bound} for $\eta
\mu\in\left(0,{1}/{(20(\tau+1))}\right]$, 
and $(d)$ by the fact that $a_i\geq \lambda$ for all $i$. Moreover, by 
\eqref{eq:convex_value_error} and the fact that all eigenvalues of $A$ are at 
most $\mu$, we arrive at the following bound:
\begin{theorem}\label{thm:convergence}
	For any delay $\tau\ge0$ and $k\ge (\tau+1)\ln(2(\tau+1))$, running DGD 
	with step 
	size $\eta\in\left(0,{1}/{(20\mu(\tau+1))}\right]$ on a 
	$\mu$-smooth, $\lambda$-strongly convex quadratic function
	yields
	\begin{align*}
	F(\bw_k)-F(\bw^*) 
	&\le 5\mu\left(1-\eta\lambda\right)^{2(k+1)}\|\bw_0 - \bw^*\|^2.
	\end{align*}
In particular, setting $\eta=\Omega(1/\mu\tau)$, we get 
that
\[
F(\bw_k)-F(\bw^*) ~\leq~ 
5\mu\norm{\bw_0-\bw^*}^2\exp\left(-\Omega\left(\frac{k\lambda}{\mu\tau}\right)\right)~.
\]
\end{theorem}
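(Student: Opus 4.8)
The plan is to assemble ingredients already developed above the statement. The main input is the bound on the error iterates derived in \eqref{ineq:generating_bound}: for every $k \ge (\tau+1)\ln(2(\tau+1))$ and every step size $\eta$ with $\eta\mu \in (0, 1/(20(\tau+1))]$, one has $\norm{\be_k} \le 3(1-\eta\lambda)^{k+1}\norm{\be_0}$, which itself rests on \lemref{lem:roots_bound}, the regime split in Ineq.~\ref{ineq:coefficients_bound}, and the observation that the spectral norm of a matrix polynomial in $A$ equals the absolute value of that polynomial evaluated at one of the eigenvalues $a_i$. First I would pass from $\be_k$ to the function value via the exact identity \eqref{eq:convex_value_error}, $F(\bw_k)-F(\bw^*) = \tfrac12\norm{\sqrt{A}\be_k}^2 = \tfrac12\,\be_k^\top A\,\be_k$, and bound $\be_k^\top A\,\be_k \le \mu\norm{\be_k}^2$ since every eigenvalue of $A$ is at most $\mu$. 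Substituting the bound on $\norm{\be_k}$ and squaring the constant $3$ gives
\[
F(\bw_k)-F(\bw^*) \;\le\; \tfrac{9\mu}{2}(1-\eta\lambda)^{2(k+1)}\norm{\bw_0-\bw^*}^2 \;\le\; 5\mu(1-\eta\lambda)^{2(k+1)}\norm{\bw_0-\bw^*}^2,
\]
using $9/2 \le 5$; this is the first assertion of the theorem.

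For the ``in particular'' clause, I would take $\eta$ as large as the hypothesis permits, e.g.\ $\eta = 1/(20\mu(\tau+1))$, which is $\Theta(1/(\mu\tau))$ (hence certainly $\Omega(1/(\mu\tau))$) while still respecting $\eta\mu \le 1/(20(\tau+1))$; for $\tau=0$ this is just the standard gradient-descent choice $\eta \asymp 1/\mu$. Then the elementary inequality $1-x \le e^{-x}$ yields $(1-\eta\lambda)^{2(k+1)} \le \exp(-2(k+1)\eta\lambda) \le \exp(-2k\eta\lambda)$, and $2k\eta\lambda = \Theta(k\lambda/(\mu\tau)) = \Omega(k\lambda/(\mu\tau))$ by the choice of $\eta$; plugging this into the displayed bound gives $F(\bw_k)-F(\bw^*) \le 5\mu\norm{\bw_0-\bw^*}^2\exp(-\Omega(k\lambda/(\mu\tau)))$.

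Since the analytically delicate work — locating the roots of $\pi_{\eta a_i}$, controlling $|\pi'_{\eta a_i}(\zeta_i)|$, and thereby estimating the Taylor coefficients of $1/\pi_{\eta a_i}$ — has already been carried out in \lemref{lem:roots_bound} and the surrounding lemmas, I do not expect any genuine obstacle in this last step. The only points requiring care are bookkeeping: tracking the numerical constants ($3^2 = 9$, $9/2 \le 5$) and keeping straight the two opposite uses of the spectrum of $A$, namely $a_i \le \mu$ in the value-error conversion versus $a_i \ge \lambda$ in the contraction factor; and checking that the step size in the corollary clause can be chosen simultaneously $\Omega(1/(\mu\tau))$ and $\le 1/(20\mu(\tau+1))$, which holds since $\tau+1 \le 2\tau$ for $\tau\ge1$ (and the $\tau=0$ case reduces to ordinary gradient descent).
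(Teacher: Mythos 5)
Your proposal is correct and is essentially identical to the paper's own (very short) derivation: the paper also obtains Theorem~\ref{thm:convergence} by combining the iterate bound $\norm{\be_k}\le 3(1-\eta\lambda)^{k+1}\norm{\be_0}$ from Ineq.~\ref{ineq:generating_bound} with the identity \eqref{eq:convex_value_error} and $a_i\le\mu$, yielding the constant $\tfrac{9}{2}\le 5$, and the ``in particular'' clause follows from $\eta=\Theta(1/(\mu(\tau+1)))$ and $1-x\le e^{-x}$ exactly as you describe.
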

Note that the assumption that $k\geq 
(\tau+1)\ln(2(\tau+1))$ is very mild, since if $k\leq \tau$ then the algorithm trivially makes no updates after $k$ rounds.

We now turn to analyze the case of $\mu$-smooth convex quadratic functions, 
where the eigenvalues of the matrix $A$ are assumed 
to lie in $[0,\mu]$. Following the same derivation as in Ineq. 
\ref{ineq:generating_bound} and using Ineq. \ref{eq:convex_value_error}, we 
have for any $k\ge (\tau+1)\ln(2(\tau+1))$ and 
$\eta\in\left(0,{1}/{(20\mu(\tau+1))}\right]$,
\begin{align} 
\begin{aligned}
F(\bw_k)-F(\bw^*) &= \frac{1}{2}\|\sqrt{A}\be_k\|^2 =
 \frac{1}{2}\|\sqrt{A}[z^k]\left((I-Iz+\eta Az^{\tau+1})^{-1}\right)\be_0\|^2\\
&\stackrel{(a)}\le \frac{1}{2}\left( 3\max_{i\in[d]} \sqrt{a_i}(1-\eta 
a_i)^{k+1}\right)^2\|\be_0\|^2
\stackrel{(b)}\le 
\frac{9}{4e\eta (k+1)}\|\be_0\|^2~,
\end{aligned}
\end{align}
where $e= 2.718...$ is Euler's number, $(a)$ is by the 
fact that the spectral norm of a matrix polynomial equals the absolute value of 
the same polynomial in one of its eigenvalues, and $(b)$ is by 
the fact that $\sqrt{a_i}(1-\eta a_i)^{k+1}\le {1}/{\sqrt{2e \eta (k+1)}}$ for 
any $i\in[d]$ (see \lemref{lem:convex_power_bound} in the supplementary 
material).
We have thus arrived at the following bound for the convex case:
\begin{theorem}\label{thm:convex_convergence}
	For any delay $\tau\ge0$ and $k\ge (\tau+1)\ln(2(\tau+1))$, running 
	DGD with step size $\eta\in\left(0,{1}/{(20\mu(\tau+1))}\right]$ on a 
	$\mu$-smooth convex quadratic function yields
	\begin{align*}
	F(\bw_k)-F(\bw^*) &\le 
	\frac{9}{4e\eta(k+1)}\norm{\bw_0-\bw^*}^2~.
	\end{align*}
In particular, if we set $\eta=\Omega(1/\mu\tau)$, we get that
\[
F(\bw_k)-F(\bw^*)\leq \Ocal\left(\frac{\mu\tau \norm{\bw_0-\bw^*}^2}{k}\right)~.
\]
\end{theorem}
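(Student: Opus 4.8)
The plan is to follow exactly the derivation already sketched in the paragraph preceding Theorem~\ref{thm:convex_convergence}, making each inequality precise. First I would write out the error recursion: since $F$ is a convex quadratic with Hessian $A$ whose eigenvalues lie in $[0,\mu]$, and $\nabla F(\bw^*)=0$, the iterates of DGD satisfy $\be_{k+1}=\be_k-\eta A\be_{k-\tau}$ with $\be_0=\dots=\be_\tau$, where $\be_k=\bw_k-\bw^*$. Applying the generating-function identity from \eqref{eq:generating_function_derivation} in the vector-valued form noted just below \eqref{eq:coefficients_of_b}, we get $\mathbf{e}(z)=(I-Iz+\eta A z^{\tau+1})^{-1}\be_0$, hence $\be_k=[z^k]\big((I-Iz+\eta Az^{\tau+1})^{-1}\big)\be_0$ by linearity of coefficient extraction.

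Next I would diagonalize $A$. Since $A$ is symmetric PSD, write $A=U\Lambda U^\top$ with $\Lambda=\mathrm{diag}(a_1,\dots,a_d)$; then $(I-Iz+\eta Az^{\tau+1})^{-1}=U\,\mathrm{diag}\big(1/\pi_{\eta a_i}(z)\big)_i\,U^\top$, so that $[z^k]$ of this matrix, sandwiched with $\sqrt{A}=U\,\mathrm{diag}(\sqrt{a_i})\,U^\top$, has spectral norm equal to $\max_{i\in[d]}\sqrt{a_i}\,\big|[z^k]\,1/\pi_{\eta a_i}(z)\big|$. This is the content of step $(a)$, using that for a polynomial (or power series) in the single matrix $A$ the spectral norm is the max over eigenvalues of the absolute value of the same scalar object. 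Combining with \eqref{eq:convex_value_error} gives
\begin{align*}
F(\bw_k)-F(\bw^*)=\tfrac12\|\sqrt{A}\be_k\|^2\le \tfrac12\Big(\max_{i\in[d]}\sqrt{a_i}\,\big|[z^k]\tfrac{1}{\pi_{\eta a_i}(z)}\big|\Big)^2\|\be_0\|^2.
\end{align*}
Here I must check the hypothesis needed to control $|[z^k]1/\pi_{\eta a_i}(z)|$: we need $\eta a_i\in(0,1/(20(\tau+1))]$, which holds since $a_i\le\mu$ and $\eta\le 1/(20\mu(\tau+1))$ (and the degenerate $a_i=0$ case is trivial since then $\sqrt{a_i}=0$). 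With $k\ge(\tau+1)\ln(2(\tau+1))$, Ineq.~\eqref{ineq:coefficients_bound} gives $|[z^k]1/\pi_{\eta a_i}(z)|\le 3(1-\eta a_i)^{k+1}$.

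For step $(b)$ I would invoke the scalar bound $\sqrt{a}(1-\eta a)^{k+1}\le 1/\sqrt{2e\eta(k+1)}$ valid for all $a\in[0,\mu]$ — this is \lemref{lem:convex_power_bound} in the supplement; it is a one-variable calculus fact (maximize $g(a)=a(1-\eta a)^{2(k+1)}$ over $a\ge0$, critical point at $a=1/(\eta(2k+3))$, and bound the resulting value using $(1-1/(2k+3))^{2(k+1)}\le e^{-1}$ up to the claimed constant). Plugging this in yields $F(\bw_k)-F(\bw^*)\le \tfrac12\cdot 9\cdot\tfrac{1}{2e\eta(k+1)}\|\be_0\|^2=\tfrac{9}{4e\eta(k+1)}\|\bw_0-\bw^*\|^2$, which is the first displayed bound. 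For the ``in particular'' clause, set $\eta=\Theta(1/(\mu\tau))$ (the largest value consistent with the constraint, up to constants), substitute, and absorb $e$ and the numeric constants into the $\Ocal(\cdot)$. I do not expect a genuine obstacle here — every nontrivial estimate (the root bounds of \lemref{lem:roots_bound}, the coefficient bounds \eqref{ineq:coefficients_bound}, and \lemref{lem:convex_power_bound}) is already established earlier or deferred to the supplement; the only care needed is bookkeeping the step-size constraint so that $\eta a_i$ falls in the admissible range for every eigenvalue, and handling the $a_i=0$ directions (which contribute nothing to $F(\bw_k)-F(\bw^*)$). The mildly delicate point, if any, is that $(b)$ must hold uniformly in $i$ including $a_i$ near $0$, but the scalar lemma is stated for the whole interval $[0,\mu]$, so this is immediate.
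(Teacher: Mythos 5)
Your proposal is correct and follows essentially the same route as the paper: diagonalize $A$, apply the coefficient bound \eqref{ineq:coefficients_bound} to each $1/\pi_{\eta a_i}(z)$ with $\eta a_i\in(0,1/(20(\tau+1))]$, and then invoke \lemref{lem:convex_power_bound} to get $\sqrt{a_i}(1-\eta a_i)^{k+1}\le 1/\sqrt{2e\eta(k+1)}$ uniformly over the spectrum, yielding $\tfrac12(3)^2\cdot\tfrac{1}{2e\eta(k+1)}\|\be_0\|^2$. Your extra care about the $a_i=0$ directions and the admissible range of $\eta a_i$ is consistent with (and slightly more explicit than) the paper's argument.
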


As discussed in the introduction, the theorems above imply that a delay of 
$\tau$ increases the iteration complexity by a factor of $\tau$. We now show 
lower bounds which imply that this linear dependence on $\tau$ is  
unavoidable, for a large family of gradient-based algorithms (of which 
gradient descent is just a special case). Specifically, we will consider any 
iterative algorithm producing iterates $\bw_0,\bw_1,\ldots$ which satisfies the 
following:
\begin{equation}\label{eq:assumplowbound}
\bw_0=\ldots=\bw_{\tau}=\mathbf{0}~~~\text{and}~~~\forall k\geq t,~
\bw_{k+1} \in  \text{span}\{ \nabla F(\bw_{0}), \nabla F(\bw_{1}),\dots, \nabla 
F(\bw_{k-\tau}) \}~.
\end{equation}
This is a standard assumption in proving optimization lower bounds (see 
\cite{nesterov2004introductory}), and is 
satisfied by most standard gradient-based methods, and in particular our DGD 
algorithm. We also note that this algorithmic assumption can be relaxed at the 
cost of a more involved proof, similar to 
\cite{nemirovskyproblem,woodworth2016tight} in the non-delayed case.

\begin{theorem}\label{thm:lower_bound}
	Consider any algorithm satisfying \eqref{eq:assumplowbound}. Then the 
	following holds for any $k\ge \tau+1$ and sufficiently large dimensionality 
	$d$:
	\begin{itemize}[leftmargin=*]
		\item There exists a 
		$\mu$-smooth, 
		$\lambda$-strongly convex function $F$ over $\reals^d$, such that
		\begin{align*}
		F(\bw_k) - F(\bw^*) ~\ge~ \frac\lambda4\exp\left( 
		-\frac{5k}{\left(\sqrt{\mu/\lambda}-1\right)(\tau+1)}\right){\|\bw_0-\bw^*\|^2}~.
		\end{align*}
		\item There exists a $\mu$-smooth, convex quadratic function $F$ over 
		$\reals^d$, such that
		\begin{align*}
		F(\bw_k) - F(\bw^*)~\ge~ 
		\frac{\mu(\tau+1)^2\norm{\bw_0 - \bw^*}^2}{45k^2}~.
		\end{align*}
	\end{itemize}
\end{theorem}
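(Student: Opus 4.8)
The plan is to reduce the delayed setting to the classical non-delayed first-order lower bounds of Nesterov, using the observation that a delay of $\tau$ forces the iterates to acquire at most one new coordinate every $\tau+1$ rounds. Since $\bw_0=\mathbf 0$ by \eqref{eq:assumplowbound}, we have $\|\bw_0-\bw^*\|=\|\bw^*\|$, which is exactly how the standard hard instances are normalized. For the two parts I would use, respectively, Nesterov's worst-case $\mu$-smooth $\lambda$-strongly convex quadratic and his worst-case $\mu$-smooth convex quadratic on $\reals^d$ (a tridiagonal quadratic form, suitably scaled so the eigenvalues lie in $[\lambda,\mu]$ resp.\ $[0,\mu]$, together with a linear term along the first coordinate, and in the convex case the tridiagonal block taken of sufficient size). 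The two structural facts I need are standard: (i) $\nabla F(\mathbf 0)$ is a nonzero multiple of the first coordinate vector, and if $\bw$ is supported on the first $i$ coordinates then $\nabla F(\bw)$ is supported on the first $i+1$ coordinates; and (ii) for any $\bw$ supported on the first $i$ coordinates, $F(\bw)-F(\bw^*)\ge\frac{\lambda}{2}\bigl(\frac{\sqrt{\kappa}-1}{\sqrt{\kappa}+1}\bigr)^{2i}\|\bw^*\|^2$ in the strongly convex case and $F(\bw)-F(\bw^*)\ge\frac{3\mu}{32(i+1)^2}\|\bw^*\|^2$ in the convex case, provided $d$ is large enough relative to $i$. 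Fact (ii) is exactly Nesterov's estimate for these instances, here phrased as a property of any point of bounded support rather than of a particular algorithm's iterate (it follows since $\inf\{\|\bw-\bw^*\|:\bw\text{ supported on first }i\text{ coords}\}$ equals the tail mass of $\bw^*$, and then one invokes strong convexity resp.\ smoothness).

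Next I would establish the key claim: under \eqref{eq:assumplowbound}, if $V_i$ denotes the span of the first $i$ coordinate vectors (so $V_0=\{\mathbf 0\}$) and $j(k):=\lfloor k/(\tau+1)\rfloor$, then $\bw_k\in V_{j(k)}$ for every $k$, proved by induction on $k$. For $k\le\tau$ it is immediate, since $\bw_k=\mathbf 0$ and $j(k)=0$. For $k\ge\tau$, \eqref{eq:assumplowbound} gives $\bw_{k+1}\in\mathrm{span}\{\nabla F(\bw_0),\dots,\nabla F(\bw_{k-\tau})\}$; by the induction hypothesis every $\bw_\ell$ with $\ell\le k-\tau$ lies in $V_{j(\ell)}\subseteq V_{j(k-\tau)}$, so by (i) every $\nabla F(\bw_\ell)$ lies in $V_{j(k-\tau)+1}$, and hence $\bw_{k+1}\in V_{j(k-\tau)+1}$. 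Finally $j(k-\tau)+1=j(k+1)$: since $(k+1)-(k-\tau)=\tau+1$, writing $k-\tau=q(\tau+1)+r$ with $0\le r<\tau+1$ shows $k+1=(q+1)(\tau+1)+r$, so the floor shifts by exactly one. This closes the induction.

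To conclude, I would apply fact (ii) with $i=j(k)=\lfloor k/(\tau+1)\rfloor$, which is legitimate because $\bw_k\in V_{j(k)}$ and we are free to take $d$ as large as needed (depending on the fixed $k$) --- this is where the "sufficiently large dimensionality $d$" hypothesis enters. This produces the stated bounds but with the iteration count replaced by $j(k)$, so it remains to simplify for $k\ge\tau+1$, where $1\le j(k)\le k/(\tau+1)$. In the convex case, $j(k)+1\le 2k/(\tau+1)$ and thus $(j(k)+1)^2\le 4k^2/(\tau+1)^2$, giving $F(\bw_k)-F(\bw^*)\ge\frac{3\mu(\tau+1)^2}{128\,k^2}\|\bw^*\|^2\ge\frac{\mu(\tau+1)^2}{45\,k^2}\|\bw_0-\bw^*\|^2$. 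In the strongly convex case, $\bigl(\frac{\sqrt{\kappa}-1}{\sqrt{\kappa}+1}\bigr)^{2}\ge\exp\!\bigl(-\frac{4}{\sqrt{\kappa}-1}\bigr)$ together with $j(k)\le k/(\tau+1)$ yields $F(\bw_k)-F(\bw^*)\ge\frac{\lambda}{2}\exp\!\bigl(-\frac{4k}{(\sqrt{\kappa}-1)(\tau+1)}\bigr)\|\bw_0-\bw^*\|^2$, which is at least the claimed bound.

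\textbf{Main obstacle.} The only genuinely new ingredient is the subspace induction together with the floor bookkeeping $j(k-\tau)+1=j(k+1)$; everything else is either a direct invocation of Nesterov's constructions or elementary constant-chasing. The secondary care points are checking that the tridiagonal-based instances really lie in the stated function classes after scaling (the eigenvalues of the standard tridiagonal matrix lie strictly between $0$ and $4$), and that the fixed-but-large $d$ demanded by fact (ii) is permitted --- which it is, by the "sufficiently large $d$" clause. I expect the induction and the bookkeeping to be the only place where care is genuinely required.
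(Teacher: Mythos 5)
Your proposal is correct and follows essentially the same route as the paper: the same Nesterov/Lan tridiagonal hard instances, the same induction showing $\bw_k$ is supported on the first $\lfloor k/(\tau+1)\rfloor$ coordinates via the identity $\lfloor(k-\tau)/(\tau+1)\rfloor+1=\lfloor(k+1)/(\tau+1)\rfloor$ (the paper's Lemma~\ref{proof:lem:non_zero_entries}), and the same tail-mass/constant-chasing conclusion. No substantive differences.
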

The proof of the theorem is very similar to standard optimization lower bounds 
for gradient-based methods without delays (e.g. 
\cite{nesterov2004introductory,lan2015optimal}), 
and is presented in the supplementary material. In fact, our main contribution 
is to recognize that the proof technique easily extends to incorporate delays.

In terms of iteration complexity, these bounds correspond to 
$\Omega\left(\tau\cdot \sqrt{\mu/\lambda}\cdot 
\ln\left(\lambda\norm{\bw_0-\bw^*}^2/\epsilon\right)
\right)$ in the strongly convex case, and $\Omega\left(\tau\cdot 
\sqrt{\mu\norm{\bw_0-\bw^*}^2/\epsilon^2}\right)$ in the convex case, which 
show that the linear dependence on $\tau$ is inevitable. The dependence on the 
other problem parameters is somewhat better than in our upper bounds, but this 
is not just an artifact of the analysis: In our delayed setting, the lower 
bounds can be matched by running \emph{accelerated} gradient descent (AGD)
\cite{nesterov2004introductory}, 
where each time we perform an accelerated gradient descent step, and then stay 
idle for $\tau$ iterations till we get the gradient of the current point. 
Overall, we perform $k/\tau$ accelerated gradient steps, and can apply the 
standard analysis of AGD to get an iteration complexity which is $\tau$ times 
the iteration complexity of AGD without delays. These match the lower bounds 
above up to constants. We believe it is possible to prove a similar upper bound 
for AGD performing an update with a delayed gradient at every iteration (like 
our DGD procedure), but the analysis is more challenging than for plain 
gradient descent, and we leave it to future work.

%


\section{Stochastic Delayed Gradient Descent}
\label{section:stochstic_delayed}

In this section, we study the case of noisy (stochastic) gradient updates, and 
the SDGD algorithm, in which the influence of the delay is quite different than 
in the noiseless case. Instantiating SDGD for quadratic $F(\bw)$ (defined in 
(\ref{quadratic_problem})) results in the following update rule
\begin{align} \label{stochastic_dynamics}
\bw_{k+1}=\bw_k-\eta \nabla F(\bw_{k-\tau} + \epsilon_k) =\bw_k-\eta 
(A\bw_{k-\tau} +\bb + \bxi_k)~,
\end{align}
where $\bxi_{k},~k\ge0$ are independent zero-mean noise terms satisfying 
$\E[\|\bxi_t\|^2]\le\sigma^2$. As before, in terms of the error term 
$\be_k=\bw_k-\bw^*$, \eqref{stochastic_dynamics} reads as $\be_{k+1} 
=\be_k-\eta A\be_{k-\tau} -\eta \bxi_k$. Given a realization of $(\bxi_{k})$, 
we denote its associated formal power series by $g(z)\coloneqq 
\sum_{k=\tau}^{\infty} \bxi_k z^k$. By an analysis similar to before, we get 
that the formal power series of the error terms $(\be_k)$ satisfies
\begin{align*}
\be(z) = (I-Iz + \eta A z^{\tau+1})^{-1}(\be_0 -\eta g(z))~.
\end{align*}
We can now bound the error terms by extracting the corresponding coefficients 
of $\be(z)$. Letting $D\coloneqq (I-Iz + \eta A z^{\tau+1})^{-1}$,
we have for any $k\ge(\tau+1)\ln(2(\tau+1))$
\begin{align}
2\cdot\E[F(\bw_k)-F(\bw^*)]&=\E\left[\left\|\sqrt{A}\be_k\right\|^2\right] 
= \E\left[\left\|\sqrt{A}~[z^k] \left(D(\be_0 -\eta g(z)) 
\right)\right\|^2\right]\notag\\
&\stackrel{(a)}{=} 
\left\|\sqrt{A}~[z^k] D\be_0\right\|^2 + \eta^2 
\E\left[\left\|\sqrt{A}~[z^k]\left(D g(z) 
\right)\right\|^2\right]\notag\\
&\stackrel{(b)}{=} 
\|\sqrt{A}~[z^k] D\be_0\|^2  + \eta^2\E\left[\|\sqrt{A}\sum_{i=0}^k 
\left([z^i]D  
\right)\bxi_{k-i}\|^2\right]\notag\\
&\stackrel{(c)}{\le} 
\|\sqrt{A}~[z^k] D\|^2\|\be_0\|^2  + \eta^2\sigma^2\sum_{i=0}^k 
\left\|\sqrt{A}~[z^i]D  \right\|^{2},
\label{ineq:stochastic}
\end{align}
where $(a)$ follows by the linearity of the bracket operation $[z^k]$ and the 
assumption that $\E[\bxi_k]=0$ for all $k$ (hence $\E[g(z)]=0$), $(b)$ follows 
by the Cauchy product for formal power series,  and $(c)$ by the hypothesis 
that $\bxi_k$ are independent and satisfy $\E[\|\bxi_k\|^2]\le\sigma^2$ for all 
$k$. 
We then upper bound both terms, building on Ineq. \ref{ineq:coefficients_bound} 
(see the supplementary material for a full derivation), resulting in the 
following theorem:
\begin{theorem} \label{thm:stochastic_rate}
Assuming the step $\eta$ satisfies $\eta\in 
(0,\frac{1}{20\mu(\tau+1)}]$, and $k\geq (\tau+1)\ln(2(\tau+1))$, 
the 
following holds for SDGD:
\begin{itemize}[leftmargin=*]
	\item  For $\lambda$-strongly convex, $\mu$-smooth quadratic convex 
	functions, $\E[F(\bw_k)-F(\bw^*)]$ is at most
	\[
	5 \mu \exp(-2\eta \lambda(k+1))\norm{\bw_0-\bw^*}^2
	~+~\frac{\eta^2\sigma^2}{2}\left(\mu(\tau+1)\ln(2(\tau+1))
	+\frac{1+e+\ln(\frac{1}{\eta\lambda})}{e\eta}\right)~.
	\]
	In particular, by tuning $\eta$ appropriately,
	\[
	\E\left(F(\bw_k)-F(\bw^*)\right)
	~\leq~\tilde{\Ocal}\left(\frac{\sigma^2}{\lambda 
	k}+\mu\norm{\be_0}^2\exp\left(-\frac{\lambda k
	}{10\mu\tau}\right)\right)~.
	\]	
	\item For $\mu$-smooth quadratic convex functions, $\E[F(\bw_k)-F(\bw^*)]$ 
	is at most
	\[
\frac{9\norm{\bw_0-\bw^*}^2}{4e\eta(k+1)}
~+~ 
\eta^2\sigma^2\left(\mu(\tau+1)\ln(2(\tau+1))
+\frac{9}{2e\eta }(1+\ln(k+1))\right)~.
	\]
	In particular, by tuning $\eta$ appropriately,
	\[
	\E\left(F(\bw_k)-F(\bw^*)\right)
	~\leq~
	\tilde{\Ocal}\left(\frac{\norm{\bw_0-\bw^*}\sigma}{\sqrt{k}}
	+\frac{\norm{\bw_0-\bw^*}^2\mu\tau}{
		k}\right)~.
	\]
\end{itemize}
\end{theorem}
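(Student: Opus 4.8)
The plan is to start from Inequality~\ref{ineq:stochastic}, which already separates $2\,\E[F(\bw_k)-F(\bw^*)]$ into a \emph{bias} part $\|\sqrt{A}\,[z^k]D\|^2\|\be_0\|^2$ and a \emph{variance} part $\eta^2\sigma^2\sum_{i=0}^k\|\sqrt{A}\,[z^i]D\|^2$ (with $D$ as defined just before that inequality), and to bound each part separately. The bias part needs no new work: the eigendecomposition argument of step~$(b)$ in Ineq.~\ref{ineq:generating_bound} gives $\|\sqrt{A}\,[z^k]D\|\le\max_{j\in[d]}\sqrt{a_j}\,\bigl|[z^k](1/\pi_{\eta a_j}(z))\bigr|$, so the bias part is exactly the quantity already controlled in the proofs of \thmref{thm:convergence} (yielding $\tfrac12\|\sqrt{A}[z^k]D\|^2\|\be_0\|^2\le 5\mu\exp(-2\eta\lambda(k+1))\|\be_0\|^2$ in the strongly convex case, via $a_j\le\mu$, $|[z^k]1/\pi_{\eta a_j}|\le 3(1-\eta a_j)^{k+1}\le 3(1-\eta\lambda)^{k+1}$, and $(1-\eta\lambda)^{2(k+1)}\le\exp(-2\eta\lambda(k+1))$) and of \thmref{thm:convex_convergence} (yielding $\tfrac12\|\sqrt{A}[z^k]D\|^2\|\be_0\|^2\le\tfrac{9}{4e\eta(k+1)}\|\be_0\|^2$ in the convex case, via $\sqrt{a_j}(1-\eta a_j)^{k+1}\le 1/\sqrt{2e\eta(k+1)}$ from \lemref{lem:convex_power_bound}).

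The core of the argument is the variance part. The same eigendecomposition gives $\|\sqrt{A}\,[z^i]D\|^2\le\max_{j\in[d]}a_j\bigl|[z^i](1/\pi_{\eta a_j}(z))\bigr|^2$, so it suffices to bound $S_j\coloneqq\sum_{i=0}^k a_j\bigl|[z^i](1/\pi_{\eta a_j}(z))\bigr|^2$ for a single eigenvalue $a_j$, with $\eta a_j\in(0,1/(20(\tau+1))]$. I split the sum at the threshold $i_0\coloneqq(\tau+1)\ln(2(\tau+1))$ from Ineq.~\ref{ineq:coefficients_bound}. For $i<i_0$ the coefficient is bounded by $1$, so those terms contribute at most $a_j\,i_0\le\mu(\tau+1)\ln(2(\tau+1))$ --- this is the $\mu(\tau+1)\ln(2(\tau+1))$ term appearing in the theorem. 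For $i\ge i_0$ one uses $|[z^i]1/\pi_{\eta a_j}(z)|\le 3(1-\eta a_j)^{i+1}$, leaving the tail $9\sum_{i=i_0}^k a_j(1-\eta a_j)^{2(i+1)}$ to be estimated.

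The point of the tail estimate is to avoid a spurious factor of $\kappa=\mu/\lambda$: one must \emph{not} bound $a_j(1-\eta a_j)^{2(i+1)}$ by a geometric series with ratio $(1-\eta\lambda)^2$, but instead use the pointwise bound $a_j(1-\eta a_j)^{2(i+1)}\le\frac{1}{2e\eta(i+1)}$, i.e.\ the square of \lemref{lem:convex_power_bound}. In the convex case this immediately yields $9\sum_{i=i_0}^k\frac{1}{2e\eta(i+1)}\le\frac{9}{2e\eta}\bigl(1+\ln(k+1)\bigr)$, precisely the second variance term of the theorem. In the strongly convex case one sharpens this by splitting the tail once more at $i_1\coloneqq\lceil 1/(2\eta\lambda)\rceil$: for $i<i_1$ the harmonic bound contributes of order $\frac{1}{e\eta}\ln\frac{1}{\eta\lambda}+O(1/\eta)$; for $i\ge i_1$ the unimodal map $a\mapsto a(1-\eta a)^{2(i+1)}$ has its interior maximizer $\frac{1}{\eta(2i+3)}$ at or below $\lambda$, hence is nonincreasing on $[\lambda,\mu]$, so $a_j(1-\eta a_j)^{2(i+1)}\le\lambda(1-\eta\lambda)^{2(i+1)}$ and the remaining geometric sum contributes only $O(1/\eta)$. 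Altogether $S_j$ is at most a constant times $\mu(\tau+1)\ln(2(\tau+1))+\frac{1+e+\ln(1/(\eta\lambda))}{e\eta}$, the bracketed expression in the strongly convex bound.

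Plugging these bias and variance estimates into Ineq.~\ref{ineq:stochastic} yields the two explicit inequalities of the theorem. The $\tilde{\Ocal}$ statements then follow by choosing $\eta=\min\{\eta^\star,\,1/(20\mu(\tau+1))\}$, where $\eta^\star$ balances the leading bias and variance contributions: $\eta^\star\asymp\frac{\ln(\cdot)}{\lambda k}$ in the strongly convex case (trading $\mu\exp(-2\eta\lambda k)\|\be_0\|^2$ against $\eta\sigma^2\ln(1/(\eta\lambda))$, so both become $\tilde{\Ocal}(\sigma^2/(\lambda k))$), and $\eta^\star\asymp\frac{\|\be_0\|}{\sigma\sqrt{k\ln k}}$ in the convex case (trading $\|\be_0\|^2/(\eta k)$ against $\eta\sigma^2\ln(k+1)$, so both become $\tilde{\Ocal}(\|\be_0\|\sigma/\sqrt{k})$); when $\eta^\star$ exceeds the cap, the active constraint $\eta=1/(20\mu(\tau+1))$ is what produces the residual $\tau$-dependent terms $\mu\|\be_0\|^2\exp(-\lambda k/(10\mu\tau))$ and $\mu\tau\|\be_0\|^2/k$. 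I expect the main obstacle to be the strongly convex tail estimate: extracting a $\ln(1/(\eta\lambda))$ factor rather than a $\kappa$ or a $\ln k$ factor requires both the pointwise $1/(i+1)$ bound and the two-level split with the monotonicity argument, and one must carefully track which regime of Ineq.~\ref{ineq:coefficients_bound} is in force at each index of the sum.
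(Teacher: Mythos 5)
Your proposal is correct and follows essentially the same route as the paper: the bias/variance split from Ineq.~\ref{ineq:stochastic}, the variance sum split at $(\tau+1)\ln(2(\tau+1))$ using the two regimes of Ineq.~\ref{ineq:coefficients_bound}, and the tail estimate via the pointwise bound $a(1-\eta a)^{2(i+1)}\le \frac{1}{2e\eta(i+1)}$ with a second split at $\approx 1/(2\eta\lambda)$ in the strongly convex case are exactly the content of the paper's \lemref{lem:convex_power_bound} and its application in \appref{proof:thm:stochastic_rate}. The step-size tuning by capping the balanced $\eta^\star$ at $1/(20\mu(\tau+1))$ likewise matches the paper's case analysis.
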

As discussed in the introduction in detail, the theorem implies that the effect 
of $\tau$ is negligible once $k$ is sufficiently large.


\bibliographystyle{plain}
\bibliography{bib}

\newpage

\appendix
\section{Proof of \lemref{lem:roots_bound}}
\label{app:section:roots}

	Recall that $\pi_\alpha(z) = 1-z+\alpha z^{\tau+1}$, and its roots, denoted by $\zeta_i$, are ordered such that $|\zeta_1|\le|\zeta_2|\le\dots\le|\zeta_{\tau+1}|$.
	In order to bound from above the magnitude of $1/\zeta_i$,  we analyze a related polynomial $p_\alpha(z) = z^{\tau+1}\pi_a(1/z)$ which takes the following explicit form
	\[
	p_\alpha(z) ~=~ z^{\tau+1}-z^{\tau}+\alpha~=~ (z-1)z^{\tau}+\alpha.
	\]
	The roots of $p_\alpha$ are precisely $1/\zeta_i$ (note that, $\pi_a(0) = 
	1\neq 0 $, hence $\zeta_i\neq0,~i\in\{1,\ldots,\tau+1\}$). Thus, bounding 
	from above (below) the magnitude of the roots of $p_\alpha(z)$ gives an 
	upper (lower) bound for $|1/\zeta_i|$.

	We first establish that for any $\alpha \in \left(0,\frac{1}{20(\tau+1)}\right]$, $p_\alpha$ has a real-valued root in $\left(1-\frac{1}{2(\tau+1)},1-\alpha\right]$. Indeed, for any such $\alpha$, we have on the one hand,
	\[
	p_{\alpha}(1-\alpha) ~=~ -\alpha(1-\alpha)^{\tau}+\alpha~=~ \alpha\left(1-(1-\alpha)^{\tau}\right)~\ge~0,
	\]
	and on the other hand (using the fact that $(1-1/2x)^x \ge 1/2$ for all $x\ge1$),
	\begin{align}
	p_\alpha\left(1-\frac{1}{2(\tau+1)}\right) &= 
	-\frac{1}{2(\tau+1)}\left(1-\frac{1}{2(\tau+1)}\right)^\tau + \alpha 
	\notag\\
	&= 
	-\frac{1}{2(\tau+1)}\left(\left(1-\frac{1}{2(\tau+1)}\right)^{\tau+1}\right)^{\frac{\tau}{\tau+1}}
	 + \alpha 
	\notag\\&\le 
	-\frac{1}{2(\tau+1)}\left(\frac{1}{2}\right)^{\frac{\tau}{\tau+1}} + \alpha 
	< -\frac{1}{20(\tau+1)}+\alpha \le0, 
	\label{ineq:lower_bound_dominant}
	\end{align}
	so by continuity of $p_z$, we get that a real-valued root exists in 
	$\left(1-\frac{1}{2(\tau+1)},1-\alpha\right]$.

	Next, we show that $\tau$ non-dominant roots of $p_\alpha$ are 
	of absolute value smaller than $R= 1-\frac{3}{2(\tau+1)}$. To this end, we invoke Rouch\'{e}'s theorem, which states 
	that for any two holomorphic functions $f,g$ in some region $K\subseteq 
	\mathbb{C}$ with closed contour $\partial K$, if $|g(z)|<|f(z)|$ for any $z\in 
	\partial K$, then $f$ and $f+g$ have the same number of zeros (counted with 
	multiplicity) inside $K$. In 
	particular, choosing $f(z)=-z^\tau$, $g(z)=z^{\tau+1}+\alpha$ and $K=\{z:|z|\le R\}$, it follows that if $|z^{\tau+1}+\alpha|<|-z^\tau|$ for all $z$ 
	such that $|z|=R$, then $f+g$ (which equals our polynomial 
	$p_\alpha$) has the same number of zeros as $f=-z^\tau$ inside $K$ (namely, exactly 
	$\tau$). However, since $p_\alpha$ is a degree $\tau+1$ polynomial, 
	it has exactly $\tau+1$ roots, so the only root of absolute value larger 
	than $R$ is the real-valued one we found earlier. It remains to verify the condition $|z^{\tau+1}+\alpha|<|-z^\tau|$ for all $z$ 
	such that $|z|=R$. For that, it is sufficient to show that 
	$|z^{\tau+1}|+\alpha < |z^{\tau}|$ for all such $z$, or equivalently, 
	$R^\tau> \alpha+R^{\tau+1}$. 
	\begin{align*}
	R^{\tau}&~=~\left(1-\frac{3}{2(\tau+1)}\right)^{\tau} ~=~ 
	\left(1-\frac{3}{2(\tau+1)}\right)^{\tau}
	-\left(1-\frac{3}{2(\tau+1)}\right)^{\tau+1}+\left(1-\frac{3}{2(\tau+1)}\right)^{\tau+1}\\
	&~=~ \frac{3}{2(\tau+1)}\left(1-\frac{3}{2(\tau+1)}\right)^{\tau}+R^{\tau+1}.
	\end{align*}
	By the inequality $1-1/(x+1)\ge\exp(-1/x)$ (see \lemref{lem:tech1} below), we have
	\begin{align*}
	1-\frac{3}{2(\tau+1)} 
	\ge
	\exp\left(\frac{-1}{2/3\tau-1/3}\right) \quad\implies\quad 
	\left(1-\frac{3}{2(\tau+1)}\right)^{\tau} 
	\ge
	\exp\left(\frac{-\tau}{2/3\tau-1/3}\right) 
	\end{align*}
	It is straightforward to verify that 
	\begin{align*}
	\frac{-\tau}{2/3\tau-1/3}\ge -3,~\tau\ge1,
	\end{align*}
	implying that
	\begin{align*}
	R^{\tau}&~=~ \frac{3}{2(\tau+1)}\left(1-\frac{3}{2(\tau+1)}\right)^{\tau}+R^{\tau+1} \\
	&~\ge~ \frac{3}{2e^3(\tau+1)}+R^{\tau+1}\\
	&~>~ \frac{1}{20(\tau+1)}+R^{\tau+1}~\ge~\alpha+R^{\tau+1}~,
	\end{align*}
	where in the last inequality we used the assumption that $\alpha\in 
	\left(0,\frac{1}{20(\tau+1)}\right]$. As mentioned earlier, the roots of $p_\alpha$ are exactly the reciprocals of the roots of $\pi_\alpha$, therefore we conclude
	\begin{align} \label{ineq:bound_on_non_dominant_roots}
	\left|\frac{1}{\zeta_i}\right|\le 1-\frac{3}{2(\tau+1)},~ i\in[\tau].
	\end{align}
	We now turn to bound $|\pi'_\alpha(\zeta_i)|$ from above. By definition, 
	any root of $\pi_a$ satisfies $\alpha\zeta_i^{\tau+1}-\zeta_i + 1 =0$. 
	Thus, $\alpha\zeta_i^{\tau} =\frac{\zeta_i-1}{\zeta_i}$ (note that as 
	mentioned in the first part of the proof, $\zeta_i\neq0$). This, in turn, 
	gives 
	\begin{align} \label{eq:roots_in_derivarive}
	\pi'_\alpha(\zeta_i) &= \alpha(\tau+1)\zeta_i^\tau - 1 = \frac{(\tau+1)(\zeta_i-1)}{\zeta_i} - 1 \nonumber
	\\&= \frac{(\tau+1)(\zeta_i-1) - \zeta_i}{\zeta_i} \nonumber
	\\&= \frac{(\tau+1)\zeta_i-(\tau+1) - \zeta_i}{\zeta_i} \nonumber
	\\&= \frac{\tau\zeta_i-(\tau+1)}{\zeta_i} 
	= \tau - \frac{(\tau+1)}{\zeta_i} 
	= (\tau+1)\left( \frac{\tau}{\tau+1} - \frac{1}{\zeta_i} \right).
	\end{align}
	In the previous parts of the proof, we showed that the distance from any 
	root of $p_\alpha$ to the contour $\{z~|~|z|=1-1/(\tau+1)\}$ is bounded 
	from below by $\frac{1}{2(\tau+1)}$ (Ineq. \ref{ineq:lower_bound_dominant} 
	and Ineq. \ref{ineq:bound_on_non_dominant_roots}), therefore 
	\begin{align*}
	|\pi'_\alpha(\zeta_i)| = (\tau+1)\left| 1- \frac{1}{\tau+1} - 
	\frac{1}{\zeta_i} \right| \ge 
	\frac{\tau+1}{2(\tau+1)}=\frac{1}{2},~i=1,\dots,\tau+1~,
	\end{align*}
	thus concluding the proof.

\section{Technical Lemmas } \label{app:section:technical_lemmas}

\begin{lemma} \label{lem:bound_1}
For any $\alpha\in[0,1/\tau]$ and $k\ge0$, it holds that $|[z^k]1/\pi_\alpha(z)|\le1$.
\end{lemma}
\begin{proof}
Recall that by \eqref{eq:coefficients_of_b}, $b_k  =  [z^{k}] \frac{b_0}{\pi_\alpha(z)}$. Therefore, suffices it to prove that $(b_k)$ (defined in \ref{basic_dynamics}) with $b_0=1$ and  $\alpha\in[0,1/\tau]$, satisfies $|b_k|\le1$ for any $k\ge0$.

For the sake of simplicity, we slightly extend $(b_k)$ to the negative indices by defining $b_{-\tau}=b_{-\tau+1}=\dots =b_{-1}=1$. We proceed by full induction. The base case holds trivially by the definition of the initial conditions of $b_k$. For the induction step, suppose that $|b_0|,\dots,|b_k|\le1$. We have $b_{k+1}=b_{k}-\alpha b_{k-\tau}$, and therefore
	\begin{align*}
	b_{k+1}=(1-\alpha)b_k+\alpha(b_k-b_{k-\tau}) = (1-\alpha)b_k+\alpha\sum_{i=k-\tau}^{k-1} (b_{i+1}-b_{i}).
	\end{align*}
	Using the recurrence relation again, this equals
	\begin{align*}
	(1-\alpha)b_k+\alpha\sum_{i=k-\tau}^{k-1} (-\alpha b_{i-\tau}) = (1-\alpha)b_k-\alpha\left(\alpha\sum_{i=k-2\tau}^{k-\tau-1} b_i\right).
	\end{align*}
	By the induction hypothesis, this equals $(1-\alpha)b_k+\alpha r_k$, where $|r_k|\leq \alpha\tau\leq 1$. Thus, $b_{k+1}$ is a weighted average of $b_k$ and $r_k$ which are both in $[-1,+1]$ by the induction hypothesis and the above, implying that we must have $b_{k+1} \in [-1,+1]$ as well. Thus, proving the induction step.
	
\end{proof}

\begin{lemma} \label{lem:bounded_quadratic}
Let $F(\bw) \coloneqq \frac{1}{2}\bw^\top A \bw + \bb^\top \bw,~A\in\reals^{d\times d}, \bb\in\reals^d$ be a convex quadratic function defined over $\reals^d$. If $F$ is bounded from below, then $F$ has a minimizer at which the gradient vanishes.
\end{lemma}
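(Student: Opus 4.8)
The plan is to prove the statement by reducing it to a linear-algebra fact about the range of $A$ relative to $\bb$, via a first-order optimality argument that exploits the boundedness assumption. First I would recall that $F$ is convex (so $A$ is positive semidefinite), and that $\nabla F(\bw) = A\bw + \bb$; a minimizer at which the gradient vanishes is therefore exactly a solution of the linear system $A\bw = -\bb$. So the whole claim amounts to showing that if $F$ is bounded from below, then $-\bb$ lies in the range (column space) of $A$.

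To establish that, I would decompose $\reals^d$ as the orthogonal direct sum $\mathrm{range}(A) \oplus \ker(A)$, which is valid since $A$ is symmetric. Write $\bb = \bb_1 + \bb_2$ with $\bb_1 \in \mathrm{range}(A)$ and $\bb_2 \in \ker(A)$. The key step is to show $\bb_2 = \mathbf{0}$: suppose not, and evaluate $F$ along the ray $\bw = -t\,\bb_2$ for $t \to \infty$. Since $A\bb_2 = \mathbf{0}$, the quadratic term $\frac{1}{2}\bw^\top A \bw$ vanishes identically on this ray, while the linear term becomes $\bb^\top(-t\bb_2) = -t\,\norm{\bb_2}^2 \to -\infty$, contradicting the assumption that $F$ is bounded below. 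Hence $\bb = \bb_1 \in \mathrm{range}(A)$, so there exists $\bw^*$ with $A\bw^* = -\bb_1 = -\bb$, i.e. $\nabla F(\bw^*) = \mathbf{0}$. Finally, convexity of $F$ guarantees that any stationary point is a global minimizer, so $\bw^*$ is indeed a minimizer.

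The main obstacle is essentially bookkeeping rather than conceptual: one must be careful to invoke the symmetry of $A$ to get the orthogonal decomposition $\reals^d = \mathrm{range}(A)\oplus\ker(A)$ and the orthogonality $\bb_1 \perp \bb_2$ (the latter is what makes the cross term vanish when computing $\bb^\top\bb_2 = \norm{\bb_2}^2$). A secondary point worth stating cleanly is why a vanishing-gradient point of a convex function is a global minimizer — this follows from the standard first-order characterization of convexity, $F(\bv) \ge F(\bw^*) + \nabla F(\bw^*)^\top(\bv - \bw^*) = F(\bw^*)$ for all $\bv$, or simply by noting $F(\bw^* + \bu) - F(\bw^*) = \frac{1}{2}\bu^\top A \bu \ge 0$ once $A\bw^* + \bb = \mathbf{0}$. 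I would close with this one-line computation, which simultaneously confirms global optimality and matches the identity $F(\bw_k) - F(\bw^*) = \frac{1}{2}\norm{\sqrt{A}\be_k}^2$ used elsewhere in the paper.
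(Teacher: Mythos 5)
Your proof is correct and follows essentially the same route as the paper's: decompose $\bb$ orthogonally along $\ker(A)\oplus\mathrm{im}(A)$, use a ray in the kernel direction to show boundedness below forces the kernel component of $\bb$ to vanish, then solve $A\bw^*=-\bb$ and invoke convexity for global optimality. No gaps.
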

\begin{proof}
Since $F$ is convex and twice differentiable, $A$ is positive semidefinite. In 
particular, we have $\reals^d = \ker(A) \oplus \text{im}(A)$ (namely, the 
direct sum of the null space and the image space of $A$). Thus, $\bb$ can 
be expressed as a sum of two  orthogonal vectors $\bb= \bb^\perp + \bar{\bb}$, 
where $\bb^\perp\in\ker(A)$ and $\bar{\bb}\in \text{im}(A)$. For any 
$\alpha\in\reals$, we have
\begin{align*}
F(\alpha \bb^\perp) = \frac{1}{2}((\bb^\perp)^\top A \bb^\perp)\alpha^2 + \alpha \bb^\top\bb^\perp 
=  \alpha\|\bb^\perp\|^2.
\end{align*}
By the hypothesis, $F$ is bounded from below, hence  $\bb^\perp$ must vanish 
(otherwise we can take $\alpha\rightarrow -\infty$ and make $F$ as negative as 
we wish). In particular, $\bb=\bar{\bb}\in\text{im}(A)$. Let $\by\in\reals^d$ 
be such that $A\by=\bb$, then $\nabla F(-\by)=A(-\by)+\bb=0$. Lastly, $F$ is 
convex, therefore $-\by$ must be a (global) minimizer, thus concluding the 
proof.
\end{proof}

\begin{lemma} \label{lem:tech1}
For any $x>0$, it holds that	$1-1/(x+1)\ge\exp(-1/x)$.
\end{lemma}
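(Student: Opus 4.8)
The plan is to reduce the claim to the standard exponential inequality $e^u \ge 1+u$, which holds for every real $u$. First I would rewrite the left-hand side: for $x>0$ we have $1-\frac{1}{x+1} = \frac{x}{x+1}$, and introducing the substitution $u = 1/x$ (so $u>0$) this is exactly $\frac{1}{1+u}$. Since $x>0$ implies $1+u>0$, the inequality to be shown, $\frac{1}{1+u} \ge \exp(-u)$, is equivalent — after multiplying both sides by the strictly positive quantity $e^u(1+u)$ — to $e^u \ge 1+u$.

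Second, I would establish (or simply cite) the elementary fact $e^u \ge 1+u$ for all $u\in\reals$. A self-contained one-line argument: set $g(u) = e^u - 1 - u$; then $g(0)=0$, and $g'(u) = e^u - 1$ is negative for $u<0$ and positive for $u>0$, so $g$ attains its global minimum $0$ at $u=0$, whence $g(u)\ge 0$ everywhere. Applying this with $u = 1/x$ and reversing the algebraic manipulation above yields $1-\frac{1}{x+1} = \frac{1}{1+1/x} \ge \exp(-1/x)$, as claimed.

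There is no real obstacle here; the only point requiring a moment's care is that the step "$\frac{1}{1+u}\ge e^{-u} \iff e^u \ge 1+u$" relies on $1+u>0$, which is guaranteed precisely because the hypothesis restricts to $x>0$ (equivalently $u>0$). Everything else is routine.
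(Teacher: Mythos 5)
Your proof is correct and takes essentially the same route as the paper's: both reduce the claim to the standard inequality $e^u \ge 1+u$ (the paper phrases it as $\ln(1+t)\le t$, proved via the mean value theorem, and applies it at $t=1/x$ before exponentiating). The substitution $u=1/x$ and the algebraic identity $1-\frac{1}{x+1}=\frac{1}{1+1/x}$ are the same in both arguments, so there is nothing further to add.
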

\begin{proof}
Since $(\ln(1+x))'=1/(1+x)>0$ for any $x>-1$, it follows by the mean-value theorem that for any $x>0$ 
\begin{align*}
\ln(1+x) = \ln(1+x) -\ln(1) =\frac{1}{1+\xi} x,
\end{align*}
for some $\xi\in (0, x)$, hence $\ln(1+x)  \le x$ for any $x>0$. In particular, for any $x>0$ we have
\begin{align*}
\ln\left(1+\frac{1}{x}\right) \le \frac{1}{x}\implies \ln\left(\frac{x}{x+1}\right) \ge \frac{-1}{x}.
\end{align*}
Taking the exponent of both sides yields the desired lower bound.
\end{proof}

\begin{lemma} \label{lem:k_regime}
	Let $\tau\ge0$. If $\alpha\in(0,1/(20(\tau+1)]$ then
	\begin{align*}
	\left(\frac{1-\frac{3}{2(\tau+1)}}{1-\alpha}\right)^{k+1} &
	\le \exp\left(-\frac{k+1}{\tau+1}\right).
	\end{align*}
	In particular, for $k\ge(\tau+1)\ln(2(\tau+1))-1$, we have
	\begin{align} \label{k_regime_ineq}
	1+\tau\left(\frac{1-\frac{3}{2(\tau+1)}}{1-\alpha}\right)^{k+1} \le 3/2.
	\end{align}
\end{lemma}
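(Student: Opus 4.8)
The plan is to reduce the claimed exponential decay to a single-step estimate and then raise to the $(k+1)$-st power. Note first that for $\tau\ge1$ the numerator $1-\frac{3}{2(\tau+1)}$ is nonnegative (indeed it is $\ge\frac14$), whereas for $\tau=0$ it equals $-\frac12<0$; but in that case the quantity enters only multiplied by $\tau=0$, so $(\ref{k_regime_ineq})$ reads $1\le\frac32$ and there is nothing to prove. Hence I assume $\tau\ge1$ from here on. The first displayed inequality will follow once I establish the single-step bound
\[
\frac{1-\frac{3}{2(\tau+1)}}{1-\alpha}~\le~\exp\!\left(-\frac{1}{\tau+1}\right),
\]
since then both sides are nonnegative and $\le1$, so raising to the $(k+1)$-st power (a monotone operation here) produces exactly $\exp(-(k+1)/(\tau+1))$ on the right.

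For the single-step bound, I would first use the hypothesis $\alpha\le\frac{1}{20(\tau+1)}$ to replace the denominator: $1-\alpha\ge1-\frac{1}{20(\tau+1)}>0$, so it suffices to bound $\bigl(1-\frac{3}{2(\tau+1)}\bigr)/\bigl(1-\frac{1}{20(\tau+1)}\bigr)$. Multiplying numerator and denominator by $20(\tau+1)$ rewrites this ratio as $\frac{20(\tau+1)-30}{20(\tau+1)-1}=1-\frac{29}{20(\tau+1)-1}$. I then compare with $1-\frac{1}{\tau+1}$: the inequality $1-\frac{29}{20(\tau+1)-1}\le1-\frac{1}{\tau+1}$ is, after cross-multiplying the positive denominators, equivalent to $29(\tau+1)\ge20(\tau+1)-1$, i.e. $9(\tau+1)\ge-1$, which always holds. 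Finally $1-\frac{1}{\tau+1}\le\exp\!\left(-\frac{1}{\tau+1}\right)$ by the elementary inequality $1-t\le e^{-t}$, which completes the single-step bound and hence the first displayed inequality.

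For the ``in particular'' assertion, I would multiply the first inequality by $\tau$ and invoke the hypothesis $k\ge(\tau+1)\ln(2(\tau+1))-1$, equivalently $k+1\ge(\tau+1)\ln(2(\tau+1))$, to get
\[
\tau\left(\frac{1-\frac{3}{2(\tau+1)}}{1-\alpha}\right)^{k+1}~\le~\tau\exp\!\left(-\frac{k+1}{\tau+1}\right)~\le~\tau\, e^{-\ln(2(\tau+1))}~=~\frac{\tau}{2(\tau+1)}~\le~\frac12,
\]
where the last step uses $\tau\le\tau+1$. Adding $1$ yields $(\ref{k_regime_ineq})$.

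The computation is entirely elementary, with generous slack throughout; the only real care is in the sign bookkeeping for $1-\frac{3}{2(\tau+1)}$ (positive precisely when $\tau\ge1$, which is why I dispose of $\tau=0$ at the outset), and in choosing the intermediate quantity $1-\frac{1}{\tau+1}$ — it must be loose enough to absorb both the replacement of $1-\alpha$ by $1-\frac{1}{20(\tau+1)}$ and the passage to the exponential, yet exactly matched to the target exponent so that the $(k+1)$-st power comes out as $e^{-(k+1)/(\tau+1)}$ rather than something weaker.
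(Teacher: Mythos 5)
Your proof is correct and follows essentially the same route as the paper's: a single-step estimate showing the base is at most $\exp(-1/(\tau+1))$, raised to the $(k+1)$-st power, and then $\tau e^{-(k+1)/(\tau+1)}\le \tfrac12$ under the stated condition on $k$; the paper instead writes the ratio as $1+\frac{\alpha-3/(2(\tau+1))}{1-\alpha}\le 1+\alpha-\frac{3}{2(\tau+1)}$ and applies $1+x\le e^x$, but the two computations are interchangeable. Your explicit disposal of $\tau=0$ is in fact more careful than the paper's proof, which silently applies $(1+x)^{k+1}\le e^{(k+1)x}$ with $1+x=\alpha-\tfrac12<0$: for $\tau=0$ the first display of the lemma genuinely fails when $k+1$ is even (e.g.\ $\tau=0$, $k=1$, $\alpha=1/20$ gives $(10/19)^2\approx 0.277>e^{-2}\approx 0.135$), whereas Ineq.~\ref{k_regime_ineq} and every invocation of the lemma in the paper carry a multiplicative factor of $\tau$ and are therefore unaffected, exactly as you observe.
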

\begin{proof}
	\begin{align*}
	\left(\frac{1-\frac{3}{2(\tau+1)}}{1-\alpha}\right)^{k+1} &= 
	\left(\frac{1-\alpha+\alpha - \frac{3}{2(\tau+1)}}{1-\alpha}\right)^{k+1}
	= \left(1 + \frac{\alpha - \frac{3}{2(\tau+1)}}{1-\alpha}\right)^{k+1}
	\le \left(1 + \alpha - \frac{3}{2(\tau+1)}\right)^{k+1},
	\end{align*}
	where the latter inequality follows from that fact that $\alpha < \frac{1}{20(\tau+1)}<\frac{3}{2(\tau+1)}$. Now,
	\begin{align*}
	\left(1 + \alpha - \frac{3}{2(\tau+1)}\right)^{k+1}&\le
	\exp\left((k+1)(\alpha - \frac{3}{2(\tau+1)})\right)
	\le \exp\left((k+1)\left(\frac{1}{20(\tau+1)}- \frac{3}{2(\tau+1)}\right)\right)\\
	&= \exp\left(-\frac{k+1}{\tau+1}\left(\frac{3}{2}-\frac{1}{20}\right)\right)
	\le \exp\left(-\frac{k+1}{\tau+1}\right).
	\end{align*}
	Lastly, to derive Ineq. \ref{k_regime_ineq}, we have
	\begin{align*}
	1+\tau\left(\frac{1-\frac{3}{2(\tau+1)}}{1-\alpha}\right)^{k+1} 
	&\le 1+(\tau+1)\exp\left(-\frac{k+1}{\tau+1}\right)
	= 1+\exp\left(\ln(\tau+1)-\frac{k+1}{\tau+1}\right)
	\le  1+1/2,
	\end{align*}
	where the last inequality by the assumption $k\ge(\tau+1)\ln(2(\tau+1))-1$. 
		
\end{proof}

\section{Proof of \thmref{thm:lower_bound}} 
\label{section:lower_bounds_deterministic}

The proof technique is based on a construction, first presented in 
\cite[Section 2.1.2]{nesterov2004introductory}, which has been proven effective 
in various settings of optimization since then.

First, we address the strongly convex case. Given  $\mu>\lambda>0$, we consider 
the following function (devised by \cite{lan2015optimal}):
\begin{align} \label{def:strongly_convex_construction}
F(\bw) \coloneqq \frac{\mu(\kappa-1)}{4}\left(\frac12\inner{A\bw, \bw}- 
\inner{\bepsilon_1,\bw}\right) + \frac{\lambda}{2}\norm{\bw}^2,
\end{align}
where $\kappa=\mu/\lambda$ as before, $\bepsilon_1$ denotes the first unit vector, and $A$ is a $d\times d$ matrix defined 
as follows
\begin{align} \label{def:lower_bound_hessian}
A = \left(\begin{matrix}
2& -1 & 0 & 0& \cdots & 0 & 0&0\\
-1& 2 & -1 & 0&\cdots & 0 & 0&0\\
\cdots&\cdots&\cdots&\cdots&\cdots&\cdots&\cdots&\cdots\\
0& 0& 0 &0& \cdots & -1 & 2&-1\\
0& 0& 0 &0& \cdots & 0 & -1 &\frac{\sqrt{\kappa}+1}{\sqrt{\kappa}+3}
\end{matrix}\right).
\end{align}
It can be easily verified that $F$ is $\mu$-smooth and $\lambda$-strongly 
convex function. Moreover, by \cite[Lemma 8]{lan2015optimal}, it follows that 
the minimizer of $f$ is $\bw^*=(q,q^2,\dots,q^d)$ where 
$q=(\sqrt{\kappa}-1)/(\sqrt{\kappa}+1)$. In particular, if $\bw\in\reals^d$ is 
a vector whose all non-zero entries are located in the first $m$ coordinates, 
where $m$ is such that $d\ge m/2 +{\log(1/2)}/{\log(q^2)}$, then 
\begin{align} \label{ineq:lower_bound_rate}
\frac{\|\bw\|^2}{\|\bw^*\|^2} \ge
\frac{ \sum_{i=m+1}^d q^{2i}}{\sum_{i=1}^d q^{2i} }
=q^{2(m+1)}\frac{ 1- q^{2(d-m-1)}}{1- q^{2d} }
\ge \frac12 q^{2(m+1)}\ge \frac12 
\exp\left(-\frac{4(m+1)}{\sqrt{\kappa}-1}\right),
\end{align}
where the last two inequalities follow from \cite[Lemma 9.b]{lan2015optimal} 
and \lemref{lem:tech1}, respectively. Therefore, by 
bookkeeping which entries of the iterates are non-zero, we can bound from below 
the distance to the minimizer. To this end, we will need the following lemma 
which, based on the tridiagonal structure of the Hessian of $F$, determines the 
non-zero entries:
\begin{lemma}\label{proof:lem:non_zero_entries}
Let $F:\reals^d\to\reals$ be a convex quadratic function specified as follows  $F(\bw)\coloneqq \frac{c}{2} \bw^\top A  \bw  + d \bepsilon_1^\top \bw$, where $A$ is a tridiagonal matrix and $c,d$ are real scalars. Assuming that the iterates produced by a given optimization algorithm satisfy $\bw_0=\dots=\bw_{\tau}=0$ and
	\begin{align*}
\forall k\ge\tau,~\bw_{k+1} \in  \text{span}\{ \nabla F(\bw_{0}), \nabla F(\bw_{1}),\dots, \nabla F(\bw_{k-\tau}) \},
	\end{align*}
	then $	\bw_k \in \text{span}\{\bepsilon_0,\bepsilon_1,\dots,\bepsilon_{\lfloor 
	k/(\tau+1)\rfloor } \}$ for all $k\ge0$ (where $\bepsilon_0$ denotes the 
	vector of all zeros, and $\bepsilon_i$ denote the $i$'th standard unit 
	vector).
\end{lemma}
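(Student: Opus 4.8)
The plan is to prove the claim by induction on $k$, tracking which coordinates of the iterates $\bw_k$ can be nonzero using the sparsity structure of the gradient. The key observation is that if $A$ is tridiagonal, then multiplying a vector supported on the first $j$ coordinates by $A$ produces a vector supported on the first $j+1$ coordinates (the tridiagonal band can ``spread'' support by at most one coordinate in each direction, but since we start from a prefix $\{1,\dots,j\}$, only the upper end can grow). Hence $\nabla F(\bw) = cA\bw + d\bepsilon_1$ is supported on $\{1,\dots,j+1\}$ whenever $\bw$ is supported on $\{1,\dots,j\}$ (and on $\{1\}$ when $\bw=\mathbf{0}$).

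First I would set up the induction hypothesis in the form: for all $m\le k$, $\bw_m\in\text{span}\{\bepsilon_0,\bepsilon_1,\dots,\bepsilon_{\lfloor m/(\tau+1)\rfloor}\}$. The base case covers $m=0,\dots,\tau$, where $\lfloor m/(\tau+1)\rfloor = 0$ and indeed $\bw_m=\mathbf{0}$ by assumption, so $\bw_m\in\text{span}\{\bepsilon_0\}$. For the induction step, assume the hypothesis holds up to index $k$ (with $k\ge\tau$) and consider $\bw_{k+1}$. By \eqref{eq:assumplowbound}, $\bw_{k+1}\in\text{span}\{\nabla F(\bw_0),\dots,\nabla F(\bw_{k-\tau})\}$. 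For each $j\le k-\tau$, the induction hypothesis gives $\bw_j$ supported on the first $\lfloor j/(\tau+1)\rfloor$ coordinates, so by the tridiagonal spreading observation, $\nabla F(\bw_j)$ is supported on the first $\lfloor j/(\tau+1)\rfloor + 1$ coordinates. The largest such index over $j\le k-\tau$ is $\lfloor (k-\tau)/(\tau+1)\rfloor + 1$, and a short arithmetic check shows $\lfloor (k-\tau)/(\tau+1)\rfloor + 1 = \lfloor (k+1)/(\tau+1)\rfloor$ (write $k+1 = q(\tau+1)+r$ with $0\le r\le \tau$; then $k-\tau = (q-1)(\tau+1) + r$, so the floor is $q-1$ when... — the identity should be verified carefully in exactly this step). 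Therefore $\bw_{k+1}$ lies in the span of $\bepsilon_0,\dots,\bepsilon_{\lfloor (k+1)/(\tau+1)\rfloor}$, completing the induction.

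The main obstacle I anticipate is getting the index bookkeeping exactly right at the boundary — precisely, verifying the identity $\lfloor (k-\tau)/(\tau+1)\rfloor + 1 = \lfloor (k+1)/(\tau+1)\rfloor$ for all $k\ge\tau$, since off-by-one errors here would propagate into the final convergence rate. This is the delay-dependent analog of the standard (non-delayed, $\tau=0$) fact that one gradient step can increase the support by one coordinate; with delay $\tau$, it takes $\tau+1$ iterations to gain one new coordinate, which is exactly the source of the factor $\tau+1$ in the lower bound. I would also need to make the tridiagonal ``support spreads by at most one'' claim precise, but that is an elementary consequence of $A_{ij}=0$ for $|i-j|>1$: if $\bw$ is supported on $\{1,\dots,j\}$, then $(A\bw)_i = \sum_{|\ell-i|\le 1} A_{i\ell}w_\ell$ vanishes whenever $i > j+1$, and the $d\bepsilon_1$ term only affects coordinate $1$.
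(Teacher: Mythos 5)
Your proposal is correct and follows essentially the same route as the paper's own proof: an induction on $k$ whose engine is the observation that a tridiagonal $A$ lets $\nabla F(\bw)=cA\bw+d\bepsilon_1$ enlarge a prefix support $\{1,\dots,m\}$ by at most one coordinate, combined with the span assumption. The one step you flagged as needing care, $\lfloor (k-\tau)/(\tau+1)\rfloor + 1 = \lfloor (k+1)/(\tau+1)\rfloor$, holds immediately because $\frac{k-\tau}{\tau+1}+1=\frac{k+1}{\tau+1}$ and adding an integer commutes with the floor, which is exactly how the paper closes the argument.
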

\begin{proof}
	
	First, note that, given a vector $\bw\in\reals^d$, such that $\bw\in\text{span}\{
	\bepsilon_0, \bepsilon_1,\dots,\bepsilon_m \}$ for some $m\ge0$, we have
	\begin{align*}
	\nabla F(\bw) = cA\bw + d \bepsilon_1.
	\end{align*}
	Since the entries of $\bw$ are all zero start from the $m+1$ coordinate, $cA\bw $ is a linear combination of the first $m$ columns of $A$. Being a $A$ tridiagonal  matrix, it follows that all the entries of $cA\bw $ are zero, except for its first $m+1$ coordinates, that is, $cA\bw\in\text{span}\{\bepsilon_0,\bepsilon_1\dots, \bepsilon_{m+1}\}$. Together,  $\nabla F(\bw) = cA\bw + d \bepsilon_1 \in \text{span}\{\bepsilon_1\dots, \bepsilon_{m+1}\}$.
	
	We proceed by full induction. For $k=0,\dots,\tau$, the claim holds trivially. Now, assume the claim holds for all  $i\le k$, where $k\ge\tau$, we show that the claim holds for $k+1$. 
	By the induction hypothesis,  $	\bw_i \in \text{span}\{\bepsilon_0,\bepsilon_1,\dots,\bepsilon_{\lfloor i/(\tau+1)\rfloor } \}$	for all $i\le k$. Therefore, by the first part of the proof, we have,  $	\nabla F (\bw_i) \in \text{span}\{\bepsilon_1,\bepsilon_2,\dots,\bepsilon_{\lfloor i/(\tau+1)\rfloor +1 } \}$ for all $i\le k$, by which we conclude that 
	$	\text{span}\{\nabla F (\bw_0),\nabla F (\bw_1),\dots,\nabla F (\bw_{k-\tau})\} \subseteq \text{span}\{\bepsilon_1,\bepsilon_2,\dots,\bepsilon_{\lfloor (k-\tau)/(\tau+1)\rfloor +1 } \}$. Thus, by the linear span assumption, it follows that 
	\begin{align}
    \bw_{k+1} \in  \text{span}\{\bepsilon_1,\bepsilon_2,\dots,\bepsilon_{\lfloor (k-\tau)/(\tau+1)\rfloor +1 }\}.
	\end{align}
	Observing that,
	\begin{align*}
	\lfloor (k-\tau)/(\tau+1)\rfloor +1 = 	\lfloor (k-\tau)/(\tau+1)+1 \rfloor = \lfloor (k+1)/(\tau+1) \rfloor,
	\end{align*}
	concludes the proof.
\end{proof}

Overall, by \lemref{proof:lem:non_zero_entries}, the $k$'th iterate $w_k$, has all its entries zero, expect for (possibly) the first $\lfloor k/(\tau+1) \rfloor$ first coordinates. By Ineq. \ref{ineq:lower_bound_rate}, for any $\tau+1\le k \le 2\left(d- \frac{\log(1/2)}{2\log(q)}\right)$, we then have
\begin{align*}
\frac{\|\bw_k\|^2}{\|\bw^*\|^2} &\ge
\frac12 \exp\left(-\frac{4( \lfloor k/(\tau+1) \rfloor  +1)}{\sqrt{\kappa}-1}\right)
\ge
\frac12 \exp\left(-\frac{4(  k/(\tau+1)  +1)}{\sqrt{\kappa}-1}\right)\\
&\ge
\frac12 \exp\left(-\frac{5  k  }{(\sqrt{\kappa}-1)(\tau+1)}\right).
\end{align*}

For the convex case, we use a construction (devised by 
\cite{nesterov2004introductory}) similar to that of the strongly convex case. 
Let $\mu>0$ be fixed and consider the following function 
\begin{align*}
F_k(\bw) \coloneqq \frac{\mu}{4}\left(\frac12\inner{A_k\bw, \bw}- 
\inner{\bepsilon_1,\bw}\right),
\end{align*}
where $A_k$ is a $d\times d$ matrix defined as follows
\begin{align*} 
A = \left(\begin{matrix}
2& -1 & 0 & 0& \cdots & 0 & 0&0\\
-1& 2 & -1 & 0&\cdots & 0 & 0&0\\
\cdots&\cdots&\cdots&\cdots&\cdots&\cdots&\cdots&\cdots& 0_{k,d-k}\\
0& 0& 0 &0& \cdots & -1 & 2&-1\\
0& 0& 0 &0& \cdots & 0 & -1 &2\\
 &  &  &  & 0_{d-k,k}& &   & & 0_{d-k,d-k}
\end{matrix}\right),
\end{align*}
where $0_{m,n}$ is an $m\times n$ zero matrix. Given an iteration number $k$ 
such that $\tau+1\le k\le \frac12(d-1)(\tau+1)$, we take our function $F$ to be 
$F_{2\lfloor \frac{k}{\tau+1}\rfloor+1}(\bw)$. Using  
\lemref{proof:lem:non_zero_entries}, the only (possibly) non-zero entries of 
the $k$'th iterate $\bw_k$ are the first $\lfloor {k}/{(\tau+1)} \rfloor $ 
coordinates. Thus, following the same lines of proof as in  \cite[Theorem 
2.1.6]{nesterov2004introductory} yields
\begin{align*}
\frac{F(\bw_k) - F(\bw^*)}{\norm{\bw^*}^2}\ge 
\frac{3L}{32(k/(\tau+1)+1)^2}\ge 
\frac{L(\tau+1)^2}{45k^2}~.
\end{align*}

\section{Proof of \thmref{thm:stochastic_rate}} \label{proof:thm:stochastic_rate}

We will first state and prove the following auxiliary lemma:
\begin{lemma} \label{lem:convex_power_bound}
	The following holds for any $\eta >0$:
	\begin{itemize}
		\item For any $k\ge1$,
		\begin{align*}
		\max_{\{a~:~0 <a<1/\eta\} }a(1-\eta a)^k \le \frac{1}{e\eta k},
		\end{align*}
		where $e=2.718...$ is Euler's number. In particular, $\sum_{i=0}^k 
		\max_{\{a~:~0<a<1/\eta\} }a(1-\eta 
		a)^{2(i+1)} \le \frac{1}{2e\eta }H_k \le \frac{1}{2e\eta 
		}(1+\ln(k+1))$, where $H_k$ denotes the $k$'th harmonic number.
		\item If, in addition, we assume that $a>\lambda$ for some  
		constant $\lambda>0$, then 
		\begin{align*}
		\sum_{i=0}^k \max_{\{a~:~\lambda< a<1/\eta\} }a(1-\eta a)^{2(i+1)} \le 
		\frac{1+e+\ln( \frac{1}{\eta\lambda}) }{e\eta}.
		\end{align*}

	\end{itemize}
\end{lemma}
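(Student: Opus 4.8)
The plan is to reduce all three bounds to the single elementary fact that $\max_{a>0} a\,e^{-m\eta a} = \frac{1}{e m\eta}$, attained at $a=\frac{1}{m\eta}$, combined with the inequality $1-\eta a\le e^{-\eta a}$: for $a\in(0,1/\eta)$ we have $1-\eta a>0$, so raising to the positive power $m$ preserves the bound $a(1-\eta a)^m\le a\,e^{-m\eta a}$.

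For the first bound, with $k\ge1$ and any $a\in(0,1/\eta)$ we get $a(1-\eta a)^k\le a\,e^{-k\eta a}\le\frac{1}{e k\eta}$, which is precisely the claim. Substituting $2(i+1)$ for $k$ and summing over $i=0,\dots,k$ gives $\sum_i \max_a a(1-\eta a)^{2(i+1)}\le\frac{1}{2e\eta}\sum_{i=0}^k\frac{1}{i+1}$, and bounding the resulting partial harmonic sum by $1+\ln(k+1)$ (which also dominates $\frac{1}{2e\eta}H_k$) yields the stated inequality.

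For the second bound I would exploit the fact that once the unconstrained maximizer $\frac{1}{2(i+1)\eta}$ of $a\mapsto a\,e^{-2(i+1)\eta a}$ drops below $\lambda$, the constrained supremum over $(\lambda,1/\eta)$ is no longer of order $1/(i\eta)$ but decays geometrically in $i$. Assuming $\lambda<1/\eta$ (otherwise the constraint region is empty), I split $\{0,\dots,k\}$ at the threshold $2(i+1)=1/(\eta\lambda)$. For indices with $2(i+1)<1/(\eta\lambda)$ I reuse the first-part estimate $\max\le\frac{1}{2e\eta(i+1)}$; there are at most $\frac{1}{2\eta\lambda}$ such indices, so by $H_n\le1+\ln n$ their total contribution is at most $\frac{1}{2e\eta}(1+\ln\frac{1}{2\eta\lambda})\le\frac{1}{2e\eta}(1+\ln\frac{1}{\eta\lambda})$. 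For indices with $2(i+1)\ge1/(\eta\lambda)$ the function $a\mapsto a\,e^{-2(i+1)\eta a}$ is nonincreasing on $[\lambda,\infty)$, so $\max_{\lambda<a<1/\eta}a(1-\eta a)^{2(i+1)}\le\lambda e^{-2(i+1)\eta\lambda}$; since each such term is at most $\lambda e^{-1}$ (as $2(i+1)\eta\lambda\ge1$ there), summing the geometric series and using $\frac{1}{1-e^{-x}}\le1+\frac1x$ (equivalently $e^x\ge1+x$) together with $\lambda<1/\eta$ bounds their contribution by $\lambda e^{-1}(1+\frac{1}{2\eta\lambda})\le\frac{3}{2e\eta}$. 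Adding the two pieces gives at most $\frac{4+\ln(1/(\eta\lambda))}{2e\eta}$, which is at most $\frac{1+e+\ln(1/(\eta\lambda))}{e\eta}$ since $2\le 2e$ and $\ln(1/(\eta\lambda))\ge0$.

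I expect the only non-mechanical step to be this last splitting argument: the key realizations are that the cutoff must be placed at $2(i+1)\approx1/(\eta\lambda)$ (so that the logarithm emerges as $\ln(1/(\eta\lambda))$ rather than $\ln k$), and that the geometric tail has to be controlled by a constant multiple of $1/\eta$, for which one needs the feasibility constraint $\lambda<1/\eta$. Everything else — locating the maximizers, the monotonicity check, and the inequalities $1-x\le e^{-x}$, $e^x\ge1+x$, $H_n\le1+\ln n$ — is routine.
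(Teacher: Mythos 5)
Your proposal is correct and follows essentially the same route as the paper: bound $a(1-\eta a)^m\le a e^{-m\eta a}$, maximize to get $\frac{1}{em\eta}$, and for the constrained sum split at $2(i+1)\approx\frac{1}{\eta\lambda}$, controlling the head by a harmonic sum and the tail by a geometric series (the paper sums $\lambda(1-\eta\lambda)^{2(i+1)}$ where you sum $\lambda e^{-2(i+1)\eta\lambda}$, a cosmetic difference). Your constants check out against the stated bound, so there is nothing to fix.
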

\begin{proof}
	By the well-known inequality $1+x\le \exp(x),~x\in\reals$, and since for 
	the domain over which we optimize it holds that $1-\eta a >0$, we have for 
	any $k\ge 1$
	\begin{align*}
	a(1-\eta a)^k\le a\exp(-\eta a k).
	\end{align*}
	Let us denote the latter by $\psi(a) \coloneqq a\exp(-\eta ak)$, and derive 
	for it the desired upper bound. 
	
	Taking the derivative of $\psi$ and setting to zero, gives 
	\begin{align*}
	(1-a \eta k)\exp(-\eta a k) =0.
	\end{align*}
	Therefore, the only stationary point of $\psi$  is $a^*=\frac{1}{\eta k }$. 
	Since $\psi'$ is positive for $a<a^*$ and negative for $a>a^*$, it follows 
	that $a^*$ is a global maximum, at which the value of $\psi$ is 
	$\frac{1}{e\eta k}$,  concluding the first part of the proof.
	
	Now, let $\lambda>0$. Since, the only maximizer of $\psi$ is at $a=\frac{1}{\eta 
	k}$, if $\lambda \ge\frac{1}{2\eta (i+1)}$, or equivalently 
	$i\ge\frac{1}{2\eta\lambda}-1$, then $\max_{\{a~:~\lambda< a<1/\eta\} 
	}a(1-\eta a)^{2(i+1)}\le \lambda(1-\eta \lambda)^{2(i+1)}$. Therefore,
	\begin{align*}
	\sum_{i=1}^k \max_{\{a~:~\lambda< a<1/\eta\} }a(1-\eta a)^{2(i+1)} &\le
	\sum_{i=1}^{\lfloor \frac{1}{2\eta\lambda}-1\rfloor  } \max_{\{a~:~\lambda< 
	a<1/\eta\} }a(1-\eta a)^{2(i+1)}\\
	&+ 
	\sum_{i=\lceil \frac{1}{2\eta\lambda}-1\rceil }^{k } \max_{\{a~:~\lambda< 
	a<1/\eta\} }a(1-\eta a)^{2(i+1)}\\
	&\le
	\sum_{i=1}^{\lfloor \frac{1}{2\eta\lambda}-1\rfloor } \frac{1}{e\eta k}
	+ 
	\sum_{i=\lceil \frac{1}{2\eta\lambda}-1\rceil }^{k } \lambda(1-\eta \lambda)^{2(i+1)}
	\\
	&\le \frac{1}{e\eta }(1+\ln( \frac{1}{2\eta\lambda})) + \frac{1}{\eta}\\
	&\le  \frac{1+e+\ln( \frac{1}{\eta\lambda}) }{e\eta}
	\end{align*}
	
\end{proof}

We now turn to prove \thmref{thm:stochastic_rate} itself. 
By Ineq. \ref{ineq:stochastic}  we have	
\begin{align} \label{ineq:stochastic_rate1}
2\E[F(\bw_k)-F(\bw^*)]&\le 	\|\sqrt{A}[z^k] D\|^2\|\be_0\|^2  + 
\eta^2\sigma^2\sum_{i=0}^k \left\|\sqrt{A}[z^i]D  \right\|^{2}.
\end{align}
We will bound each of the terms above separately. Assuming $\eta 
\in\left(0,\frac{1}{20\mu(\tau+1)}\right]$ we have by Ineq. 
\ref{ineq:generating_bound} and Ineq. \ref{ineq:coefficients_bound}, 
\begin{align} \label{ineq:stochastic_rate0}
\begin{aligned}
\|\sqrt{A}[z^k] D\|^2 &=
\|\sqrt{A}[z^k]\left((I-Iz+\eta A z^{\tau+1})^{-1}\right)\|^2\\
&\le \max_{i\in[d]}\left|\sqrt{a_i}[z^k] \frac{1}{\pi_{\eta a_i}(z)} \right|^2\\
&\le
\begin{cases}
\max_{i\in[d]} a_i & 0\le k \le (\tau+1)\ln(2(\tau+1))-1,\\
9\max_{i\in[d]} a_i (1-\alpha)^{2(k+1)}& k \ge (\tau+1)\ln(2(\tau+1)),
\end{cases}
\end{aligned}
\end{align}
Thus, for the first term, assuming $k \ge (\tau+1)\ln(2(\tau+1))$, we have
\begin{align} \label{ineq:stochastic_rate2}
\begin{aligned}
\|\sqrt{A}[z^k] D\|^2 &\le 9  \max_{i\in[d]} a_i (1-\eta a_i)^{2(k+1)}
\le 9 \mu \max_{i\in[d]}  (1-\eta a_i)^{2(k+1)}\\
&\le 9 \mu \exp(-2\eta \lambda(k+1)).
\end{aligned}
\end{align}
Bounding the second term in Ineq. \ref{ineq:stochastic_rate1} is somewhat more 
involved and requires separating into the two regimes stated in Ineq. \ref{ineq:stochastic_rate0}:
\begin{align}\label{ineq:stochastic_rate3}
\begin{aligned}
\sum_{i=0}^k \left\|\sqrt{A}[z^i]D  \right\|^{2}
&\le
\sum_{i=0}^{\lceil(\tau+1)\ln(2(\tau+1))\rceil-1} \left\|\sqrt{A}[z^i]D  \right\|^{2}
+\sum_{i=\lceil(\tau+1)\ln(2(\tau+1))\rceil}^{k} \left\|\sqrt{A}[z^i]D  \right\|^{2}\\
&\le \mu(\tau+1)\ln(2(\tau+1))
+9\sum_{i=0}^{k}  \max_{i\in[d]} a_i(1-\eta a_i)^{2(i+1)}
\end{aligned}
\end{align}
We proceed by considering the strongly convex case and the convex case 
separately. For the strongly convex case we have by 
\lemref{lem:convex_power_bound} 	
\begin{align*}
\sum_{i=0}^k \left\|\sqrt{A}[z^i]D  \right\|^{2} 
&\le  \mu(\tau+1)\ln(2(\tau+1))
+9\sum_{i=0}^{k}  \max_{i\in[d]} a_i(1-\eta a_i)^{2(i+1)}\\
&\le 
\mu(\tau+1)\ln(2(\tau+1))
+\frac{1+e+\ln(\frac{1}{\eta\lambda})}{\eta}.
\end{align*}
Together with Ineq. \ref{ineq:stochastic_rate1} and Ineq. \ref{ineq:stochastic_rate2}, this implies that for $k \ge (\tau+1)\ln(2(\tau+1))$,
\begin{align*}
2\E[&F(\bw_k)-F(\bw^*)]\le 	\|\sqrt{A}[z^k] D\|^2\|\be_0\|^2  + 
\eta^2\sigma^2\sum_{i=0}^k \left\|\sqrt{A}[z^i]D  \right\|^{2}\\
&\le 9 \mu \exp(-2\eta \lambda(k+1))\|\be_0\|^2 + \eta^2\sigma^2\left(\mu(\tau+1)\ln(2(\tau+1))
+\frac{1+e+\ln(\frac{1}{\eta\lambda})}{e\eta}\right)~,
\end{align*}
resulting in the first bound stated in the theorem. 
To get the second bound, we show how to 
optimally tune the step size $\eta$ (up to log factors).
Ignoring the log factors, the bound above is
\[
\E\left(F(\bw_k)-F(\bw^*)\right)~\leq~ 
\tilde{\Ocal}\left(\mu\norm{\be_0}^2\exp(-2\eta\lambda 
k)+\eta^2\sigma^2\left(\mu\tau+\frac{1}{\eta}\right)\right)~.
\]
Moreover, since we assume that $\eta\leq \Ocal(1/\mu\tau)$, we get that 
$\mu\tau$ is dominated (up to constants) by $1/\eta$, so we can simplify the 
above to
\begin{equation}\label{eq:strconvtilde}
\E\left(F(\bw_k)-F(\bw^*)\right)~\leq~ 
\tilde{\Ocal}\left(\mu\norm{\be_0}^2\exp(-2\eta\lambda 
k)+\eta\sigma^2\right)~.
\end{equation}
We now consider three cases:
\begin{itemize}
	\item If $0\leq \frac{\ln(\lambda\mu\norm{\be_0}^2 k/\sigma^2)}{2\lambda 
	k}\leq 
	\frac{1}{20(\mu\tau)}$, we can pick $\eta = \frac{\ln(\lambda 
	\mu\norm{\be_0}^2
	k/\sigma^2)}{2\lambda k}$, and get that \eqref{eq:strconvtilde} is
	\begin{align*}
	\tilde{\Ocal}\left(\frac{\sigma^2}{\lambda k}\right)~=~
	\tilde{\Ocal}\left(\mu\norm{\be_0}^2\exp\left(-\frac{\lambda k
	}{10\mu\tau}\right)+\frac{\sigma^2}{\lambda k}\right)
	\end{align*}
	\item If $ \frac{\ln(\lambda\mu\norm{\be_0}^2 k/\sigma^2)}{2\lambda 
		k} <0$, it follows that $\mu\norm{\be_0}^2\leq 
		\frac{\sigma^2}{\lambda k}$. In that case, we pick $\eta=0$, and get 
		that \eqref{eq:strconvtilde} is
		\[	
		\tilde{\Ocal}\left(\mu\norm{\be_0}^2\right)~\leq~\tilde{\Ocal}\left(\frac{\sigma^2}{\lambda
		 k}\right)~=~
	 \tilde{\Ocal}\left(\mu\norm{\be_0}^2\exp\left(-\frac{\lambda k
 }{10\mu\tau}\right)+\frac{\sigma^2}{\lambda k}\right)~.
		\]
	\item If $\frac{\ln(\lambda\mu\norm{\be_0}^2 k/\sigma^2)}{2\lambda 
		k}> 
	\frac{1}{20(\mu\tau)}$, we pick $\eta=\frac{1}{20(\mu\tau)}$, and get that 
	\eqref{eq:strconvtilde} is
	\[
	\tilde{\Ocal}\left(\mu\norm{\be_0}^2\exp\left(-\frac{\lambda 
	k}{10\mu\tau}\right)+\frac{\sigma^2}{\mu\tau}\right)~\leq~
\tilde{\Ocal}\left(\mu\norm{\be_0}^2\exp\left(-\frac{\lambda k
}{10\mu\tau}\right)+\frac{\sigma^2}{\lambda k}\right)~.
	\]
\end{itemize}
Collecting the three cases above, we get a 
bound of
\[
\tilde{\Ocal}\left(\mu\norm{\be_0}^2\exp\left(-\frac{\lambda k
}{10\mu\tau}\right)+\frac{\sigma^2}{\lambda k}\right)
\]
as required.


For the convex case, we have by Ineq. \ref{ineq:stochastic_rate1}, Ineq.  \ref{ineq:stochastic_rate0} and \lemref{lem:convex_power_bound}, that for $k \ge (\tau+1)\ln(2(\tau+1))$
\begin{align*}
\E[&F(\bw_k)-F(\bw^*)]\\
&\le 
 \frac{9}{4e\eta(k+1)}\|\be_0\|^2 + \frac{\eta^2\sigma^2}{2}\left(\mu(\tau+1)\ln(2(\tau+1))
+\frac{9}{2e\eta }(1+\ln(k+1))\right)~,
\end{align*}
resulting in the third bound in the theorem. To get the fourth bound, we now 
show how to optimally tune the step size $\eta$ (up to log factors).
Ignoring the log factors, the bound above is
\[
\tilde{\Ocal}\left(\frac{\norm{\be_0}^2}{\eta 
k}+\eta^2\sigma^2\left(\mu\tau+\frac{1}{\eta}\right)\right)~.
\]
As in the strongly convex case, since we assume $\eta\leq \Ocal(1/(\mu\tau)$, 
we 
can simplify the above to
\[
\tilde{\Ocal}\left(\frac{\norm{\be_0}^2}{\eta 
	k}+\eta\sigma^2\right)~.
\]
We now consider two cases:
\begin{itemize}
	\item If $\frac{\norm{\be_0}}{\sigma\sqrt{k}}\leq \frac{1}{20(\mu\tau)}$, 
	we choose $\eta=\frac{\norm{\be_0}}{\sigma\sqrt{k}}$, and get
	\[
	\tilde{\Ocal}\left(\frac{\norm{\be_0}\sigma}{\sqrt{k}}\right)~=~
	\tilde{\Ocal}\left(\frac{\norm{\be_0}^2\mu\tau}{
		k}+\frac{\norm{\be_0}\sigma}{\sqrt{k}}\right)~.
	\]
	\item If $\frac{\norm{\be_0}}{\sigma\sqrt{k}}> \frac{1}{20(\mu\tau)}$, we 
	choose $\eta=\frac{1}{20(\mu\tau)}$, and get
	\[
	\tilde{\Ocal}\left(\frac{\norm{\be_0}^2\mu\tau}{
	k}+\frac{\sigma^2}{\mu\tau}\right)~\leq~
	\tilde{\Ocal}\left(\frac{\norm{\be_0}^2\mu\tau}{
		k}+\frac{\norm{\be_0}\sigma}{\sqrt{k}}\right)~.
	\]
\end{itemize}
Collecting the two cases above, we get a 
bound of
\[
\tilde{\Ocal}\left(\frac{\norm{\be_0}^2\mu\tau}{
k}+\frac{\norm{\be_0}\sigma}{\sqrt{k}}\right)
\]
as required.
%
%

\end{document}